\documentclass[10pt]{article}
\usepackage{amssymb,amsthm}
\usepackage[tbtags]{amsmath}
\usepackage{color}
\newtheorem{thm}{Theorem}[section]
\newtheorem{corollary}{Corollary}[section]
\newtheorem{lem}{Lemma}[section]

\theoremstyle{definition}
\newtheorem{de}{Definition}[section]

\theoremstyle{remark}
\newtheorem{rem}{Remark}[section]

\setlength{\parindent}{0cm}
\begin{filecontents*}{GYYYZ.bib}
@preamble{ "\newcommand{\nosort}[1]{}" }

@article {HMS12,
    AUTHOR = {Hashorva, E. and Mishura, Y.S. and Seleznjev, O.},
     TITLE = {Boundary non-crossing probabilities for fractional Brownian motion with trend},
   JOURNAL = {arXiv:1309.7624},
    VOLUME = {},
      YEAR = {2013},
    NUMBER = {},
}

@book {berlinet,
    AUTHOR = {Berlinet, A. and Thomas-Agnan, C.},
     TITLE = {Reproducing kernel {H}ilbert spaces in probability and statistics},
 PUBLISHER = {Kluwer Academic Publishers},
   ADDRESS = {Boston, MA},
      YEAR = {2004},
}

  @book {Pit96,
    AUTHOR = {Piterbarg, V.I.},
     TITLE = {Asymptotic {M}ethods in the {T}heory of {G}aussian {P}rocesses and {F}ields},
    SERIES = {Translations of Mathematical Monographs},
    VOLUME = {148},
    PUBLISHER = {American Mathematical Society},
   ADDRESS = {Providence, RI},
      YEAR = {1996},
     PAGES = {xii+206},
}

@article {mishura,
    AUTHOR = {Mishura, Y.S.},
     TITLE = {A generalized {I}t\^o formula for two-parameter martingales.
              {II}},
   JOURNAL = {Teor. Veroyatnost. i Mat. Statist.},
  FJOURNAL = {Teoriya Veroyatnoste\u\i\ i Matematicheskaya Statistika},
    NUMBER = {32},
      YEAR = {1985},
     PAGES = {72--85, 134},
      ISSN = {0131-6982},
   MRCLASS = {60G48},
  MRNUMBER = {882161 (88d:60138)},
MRREVIEWER = {Dominique L{\'e}pingle},
}

@article {Cairoli,
    AUTHOR = {Cairoli, R. and Walsh, John B.},
     TITLE = {Stochastic integrals in the plane},
   JOURNAL = {Acta Math.},
  FJOURNAL = {Acta Mathematica},
    VOLUME = {134},
      YEAR = {1975},
     PAGES = {111--183},
      ISSN = {0001-5962},
   MRCLASS = {60H05},
  MRNUMBER = {0420845 (54 \#8857)},
MRREVIEWER = {V. S. Mandrekar},
}

@article {Wong-1,
    AUTHOR = {Wong, E. and Zakai,  M.},
     TITLE = {Weak martingales and stochastic integrals in the plane},
   JOURNAL = {Ann. Probability},
    VOLUME = {4},
      YEAR = {1976},
    NUMBER = {4},
     PAGES = {570--586},
   MRCLASS = {60G45 (60H05)},
  MRNUMBER = {0517927 (58 \#24534)},
}

@article {Wong,
    AUTHOR = {Wong, E.  and Zakai, M.},
     TITLE = {Martingales and stochastic integrals for processes with a
              multi-dimensional parameter},
   JOURNAL = {Z. Wahrscheinlichkeitstheorie und Verw. Gebiete},
    VOLUME = {29},
      YEAR = {1974},
     PAGES = {109--122},
   MRCLASS = {60H05 (60G45)},
  MRNUMBER = {0370758 (51 \#6983)},
MRREVIEWER = {Vaclav E. Benes},
}

@incollection {LiKuelbs,
    AUTHOR = {Li, W.V. and Kuelbs, J.},
     TITLE = {Some shift inequalities for {G}aussian measures},
 BOOKTITLE = {High dimensional probability ({O}berwolfach, 1996)},
    SERIES = {Progr. Probab.},
    VOLUME = {43},
     PAGES = {233--243},
 PUBLISHER = {Birkh\"auser},
   ADDRESS = {Basel},
      YEAR = {1998},
   MRCLASS = {60G15 (60B12 60E15)},
  MRNUMBER = {1652329 (99k:60106)},
MRREVIEWER = {Werner Linde},
}

@article {Pillow,
    AUTHOR = {Hashorva, E.},
     TITLE = {Boundary non-crossings of {B}rownian pillow},
   JOURNAL = {J. Theoret. Probab.},
  FJOURNAL = {Journal of Theoretical Probability},
    VOLUME = {23},
      YEAR = {2010},
    NUMBER = {1},
     PAGES = {193--208},
      ISSN = {0894-9840},
     CODEN = {JTPREO},
   MRCLASS = {60J65 (60F10 60G17 60G70)},
  MRNUMBER = {2591910 (2011g:60150)},
}

@article {BiHa1,
    AUTHOR = {Bischoff, W. and Hashorva, E.},
     TITLE = {A lower bound for boundary crossing probabilities of
              {B}rownian bridge/motion with trend},
   JOURNAL = {Statist. Probab. Lett.},
  FJOURNAL = {Statistics \& Probability Letters},
    VOLUME = {74},
      YEAR = {2005},
    NUMBER = {3},
     PAGES = {265--271},
      ISSN = {0167-7152},
     CODEN = {SPLTDC},
   MRCLASS = {60J65 (60H10)},
  MRNUMBER = {2189465 (2006i:60116)},
MRREVIEWER = {Olivier Raimond},
}
		
@article {MR2175400,
    AUTHOR = {Hashorva, E.},
     TITLE = {Exact asymptotics for boundary crossing probabilities of
              {B}rownian motion with piecewise linear trend},
   JOURNAL = {Electron. Comm. Probab.},
  FJOURNAL = {Electronic Communications in Probability},
    VOLUME = {10},
      YEAR = {2005},
     PAGES = {207--217 (electronic)},
      ISSN = {1083-589X},
   MRCLASS = {60J65 (60F10)},
  MRNUMBER = {2175400 (2006g:60121)},
MRREVIEWER = {Boualem Djehiche},
}
		
@article {MR2028621,
    AUTHOR = {Bischoff, W. and Hashorva, E. and H{\"u}sler, J. and Miller, F.},
     TITLE = {Exact asymptotics for boundary crossings of the {B}rownian
              bridge with trend with application to the {K}olmogorov test},
   JOURNAL = {Ann. Inst. Statist. Math.},
  FJOURNAL = {Annals of the Institute of Statistical Mathematics},
    VOLUME = {55},
      YEAR = {2003},
    NUMBER = {4},
     PAGES = {849--864},
      ISSN = {0020-3157},
     CODEN = {AISXAD},
   MRCLASS = {60J65 (62G10)},
  MRNUMBER = {2028621 (2004i:60115)},
MRREVIEWER = {Lajos Horv{\'a}th},
}
		
@article {MR2016767,
    AUTHOR = {Bischoff, W. and Miller, F. and Hashorva, E. and H{\"u}sler, J.},
     TITLE = {Asymptotics of a boundary crossing probability of a {B}rownian
              bridge with general trend},
   JOURNAL = {Methodol. Comput. Appl. Probab.},
  FJOURNAL = {Methodology and Computing in Applied Probability},
    VOLUME = {5},
      YEAR = {2003},
    NUMBER = {3},
     PAGES = {271--287},
      ISSN = {1387-5841},
   MRCLASS = {60F10 (60G15 60G70)},
  MRNUMBER = {2016767 (2004j:60055)},
MRREVIEWER = {L{\'a}szl{\'o} Viharos},
}

@article{1137.60023,
author="{Bischoff, W. and Hashorva, E. and H\"usler, J.}",
title="{An asymptotic result for non crossing probabilities of Brownian motion
    with trend.}",
language="English",
journal="Commun. Stat., Theory Methods ",
volume="36",
number="13-16",
pages="2821-2828",
year="2007",
}

@article{1079.62047,
author="{Bischoff, W. and Hashorva, E. and H\"usler, J. and Miller, F.}",
title="{Analysis of a change-point regression problem in quality control by
    partial sums processes and Kolmogorov type tests.}",
language="English",
journal="Metrika ",
volume="62",
number="1",
pages="85-98",
year="2005",
}

@article{1103.60040,
author="{Bischoff, W. and Hashorva, E. and H\"usler, J. and Miller, F.}",
title="{On the power of the Kolmogorov test to detect the trend of a Brownian
    bridge with applications to a change-point problem in regression models.}",
language="English",
journal="Stat. Probab. Lett. ",
volume="66",
number="2",
pages="105-115",
year="2004",
}

@book {Yu08LNotes,
    AUTHOR = {Mishura, Y.S.},
     TITLE = {Stochastic calculus for fractional {B}rownian motion and
              related processes},
    SERIES = {Lecture Notes in Mathematics},
    VOLUME = {1929},
 PUBLISHER = {Springer-Verlag},
   ADDRESS = {Berlin},
      YEAR = {2008},
     PAGES = {xviii+393},
      ISBN = {978-3-540-75872-3},
   MRCLASS = {60G15 (60-02 60H05 60H10 62M09 62M20 62P05 91B28)},
  MRNUMBER = {2378138 (2008m:60064)},
MRREVIEWER = {Ivan Nourdin},
}
	
@preamble{
   "\def\cprime{$'$} "
}
@book {SamkoKM,
    AUTHOR = {Samko, S.G. and Kilbas, A.A. and Marichev, O.I.},
     TITLE = {Fractional integrals and derivatives},
      NOTE = {Theory and applications,
              Edited and with a foreword by S. M. Nikol{\cprime}ski{\u\i},
              Translated from the 1987 Russian original},
 PUBLISHER = {Gordon and Breach Science Publishers},
   ADDRESS = {Yverdon},
      YEAR = {1993},
     PAGES = {xxxvi+976},
      ISBN = {2-88124-864-0},
   MRCLASS = {26A33},
  MRNUMBER = {1347689 (96d:26012)},
}

@article {Jost,
    AUTHOR = {Jost, C.},
     TITLE = {Transformation formulas for fractional {B}rownian motion},
   JOURNAL = {Stochastic Process. Appl.},
  FJOURNAL = {Stochastic Processes and their Applications},
    VOLUME = {116},
      YEAR = {2006},
    NUMBER = {10},
     PAGES = {1341--1357},
      ISSN = {0304-4149},
     CODEN = {STOPB7},
   MRCLASS = {60G15 (26A33 60G18 60H05)},
  MRNUMBER = {2260738 (2007e:60026)},
MRREVIEWER = {Ivan Nourdin},
}

@incollection {Faletal2010,
    AUTHOR = {Falk, M. and H\"usler, J. and Reiss, R.-D. },
     TITLE = {Laws of {S}mall {N}umbers: {E}xtremes and {R}are {E}vents},
 BOOKTITLE = {DMV Seminar},
 VOLUME = {23},
     PAGES = {3rd edn},
 PUBLISHER = { Birkh\"auser},
   ADDRESS = {Basel},
      YEAR = {2010},
  }

@article {NVV,
    AUTHOR = {Norros, I. and Valkeila, E. and Virtamo, J.},
     TITLE = {An elementary approach to a {G}irsanov formula and other
              analytical results on fractional {B}rownian motions},
   JOURNAL = {Bernoulli},
  FJOURNAL = {Bernoulli. Official Journal of the Bernoulli Society for
              Mathematical Statistics and Probability},
    VOLUME = {5},
      YEAR = {1999},
    NUMBER = {4},
     PAGES = {571--587},
      ISSN = {1350-7265},
   MRCLASS = {60G15 (60G30 60H99)},
  MRNUMBER = {1704556 (2000f:60053)},
MRREVIEWER = {Laurent Decreusefond},
}

@article{Decr,
author="{Decreusefond, L. and $\ddot{U}$st$\ddot{u}$nel, A.S.}",
title="{Stochastic analysis of the fractional Brownian motion}",
journal="Potential Anal. ",
volume="10",
number="2",
pages="177-214",
year="1999",
}

@article{decreusefond,
author="{Decreusefond, L. and \"{U}st\"{u}nel, A.S.}",
title="{Application du calcul des variations
stochastiques au mouvement brownien fractionnaire}",
journal="Compte-Rendus de l'Academie des Sciences",
volume="321",
number="2",
pages="1605–-1608",
year="1995",
}

@article {Wang2001,
    AUTHOR = {P{\"o}tzelberger, K. and Wang, L.},
     TITLE = {Boundary crossing probability for {B}rownian motion},
   JOURNAL = {J. Appl. Probab.},
  FJOURNAL = {Journal of Applied Probability},
    VOLUME = {38},
      YEAR = {2001},
    NUMBER = {1},
     PAGES = {152--164},
      ISSN = {0021-9002},
     CODEN = {JPRBAM},
   MRCLASS = {60J65 (60G40 65C05)},
  MRNUMBER = {1816120 (2002a:60138)},
MRREVIEWER = {Peter M{\"o}rters},
}
		
@article {Wang97,
    AUTHOR = {Wang, L. and P{\"o}tzelberger, K.},
     TITLE = {Boundary crossing probability for {B}rownian motion and
              general boundaries},
   JOURNAL = {J. Appl. Probab.},
  FJOURNAL = {Journal of Applied Probability},
    VOLUME = {34},
      YEAR = {1997},
    NUMBER = {1},
     PAGES = {54--65},
      ISSN = {0021-9002},
     CODEN = {JPRBAM},
   MRCLASS = {60G40 (60J65 65C05)},
  MRNUMBER = {1429054 (97h:60042)},
}

@incollection {MR2732853,
    AUTHOR = {Borovkov, K.A. and Downes, A.N. and Novikov, A.A.},
     TITLE = {Continuity theorems in boundary crossing problems for
              diffusion processes},
 BOOKTITLE = {Contemporary quantitative finance},
     PAGES = {335--351},
 PUBLISHER = {Springer-Verlag},
   ADDRESS = {Berlin},
      YEAR = {2010},
   MRCLASS = {60J60 (60G40 60J65 91G80)},
  MRNUMBER = {2732853 (2011k:60265)},
MRREVIEWER = {Laura Sacerdote},
}
		
@article {MR2576883,
    AUTHOR = {Borovkov, K. and Downes, A.N.},
     TITLE = {On boundary crossing probabilities for diffusion processes},
   JOURNAL = {Stochastic Process. Appl.},
  FJOURNAL = {Stochastic Processes and their Applications},
    VOLUME = {120},
      YEAR = {2010},
    NUMBER = {2},
     PAGES = {105--129},
      ISSN = {0304-4149},
     CODEN = {STOPB7},
   MRCLASS = {60J60 (60G40 60J65 60J70)},
  MRNUMBER = {2576883 (2011a:60285)},
MRREVIEWER = {Laura Sacerdote},
}
		
@article {MR2443083,
    AUTHOR = {Downes, A.N. and Borovkov, K.},
     TITLE = {First passage densities and boundary crossing probabilities
              for diffusion processes},
   JOURNAL = {Methodol. Comput. Appl. Probab.},
  FJOURNAL = {Methodology and Computing in Applied Probability},
    VOLUME = {10},
      YEAR = {2008},
    NUMBER = {4},
     PAGES = {621--644},
      ISSN = {1387-5841},
   MRCLASS = {60J60 (60J70)},
  MRNUMBER = {2443083 (2009m:60188)},
MRREVIEWER = {Rosa Maria Mininni},
}
		
@article {BNovikov2005,
    AUTHOR = {Borovkov, K. and Novikov, A.},
     TITLE = {Explicit bounds for approximation rates of boundary crossing
              probabilities for the {W}iener process},
   JOURNAL = {J. Appl. Probab.},
  FJOURNAL = {Journal of Applied Probability},
    VOLUME = {42},
      YEAR = {2005},
    NUMBER = {1},
     PAGES = {82--92},
      ISSN = {0021-9002},
     CODEN = {JPRBAM},
   MRCLASS = {60J65 (60G40 91B28 91B70)},
  MRNUMBER = {2144895 (2006b:60177)},
MRREVIEWER = {Sergio Ortobelli Lozza},
}

@article {MR2065480,
    AUTHOR = {Janssen, A. and Kunz, M.},
     TITLE = {Brownian type boundary crossing probabilities for piecewise
              linear boundary functions},
   JOURNAL = {Comm. Statist. Theory Methods},
  FJOURNAL = {Communications in Statistics. Theory and Methods},
    VOLUME = {33},
      YEAR = {2004},
    NUMBER = {7},
     PAGES = {1445--1464},
      ISSN = {0361-0926},
     CODEN = {CSTMDC},
   MRCLASS = {60G15 (62G10)},
  MRNUMBER = {2065480 (2005j:60077)},
MRREVIEWER = {M. Cs{\"o}rg{\H{o}}},
}
	
@article {janssen,
    AUTHOR = {Janssen, A. and {\"U}nl{\"u}, H.},
     TITLE = {Regions of alternatives with high and low power for
              goodness-of-fit tests},
   JOURNAL = {J. Statist. Plann. Inference},
  FJOURNAL = {Journal of Statistical Planning and Inference},
    VOLUME = {138},
      YEAR = {2008},
    NUMBER = {8},
     PAGES = {2526--2543},
      ISSN = {0378-3758},
     CODEN = {JSPIDN},
   MRCLASS = {62G10},
  MRNUMBER = {2414261 (2009m:62126)},
}

@article {MR3001211,
    AUTHOR = {P{\"o}tzelberger, K.},
     TITLE = {Improving the {M}onte {C}arlo estimation of boundary crossing
              probabilities by control variables},
   JOURNAL = {Monte Carlo Methods Appl.},
  FJOURNAL = {Monte Carlo Methods and Applications},
    VOLUME = {18},
      YEAR = {2012},
    NUMBER = {4},
     PAGES = {353--377},
      ISSN = {0929-9629},
   MRCLASS = {65C05 (60G40 60J60 60J65 60J70)},
  MRNUMBER = {3001211},
}
		
@article {MR2752890,
    AUTHOR = {Fu, J.C. and Wu, T-L.},
     TITLE = {Linear and nonlinear boundary crossing probabilities for
              {B}rownian motion and related processes},
   JOURNAL = {J. Appl. Probab.},
  FJOURNAL = {Journal of Applied Probability},
    VOLUME = {47},
      YEAR = {2010},
    NUMBER = {4},
     PAGES = {1058--1071},
      ISSN = {0021-9002},
     CODEN = {JPRBAM},
   MRCLASS = {60J65 (60J10 60J60)},
  MRNUMBER = {2752890 (2012b:60269)},
MRREVIEWER = {Paul G. Spirakis},
}
		
@article {Alil2010,
    AUTHOR = {Alili, L. and Patie, P.},
     TITLE = {Boundary-crossing identities for diffusions having the
              time-inversion property},
   JOURNAL = {J. Theoret. Probab.},
  FJOURNAL = {Journal of Theoretical Probability},
    VOLUME = {23},
      YEAR = {2010},
    NUMBER = {1},
     PAGES = {65--84},
      ISSN = {0894-9840},
     CODEN = {JTPREO},
   MRCLASS = {60J60 (60G18 60G40 60J65)},
  MRNUMBER = {2591904 (2011b:60312)},
MRREVIEWER = {Laura Sacerdote},
}

@article {MR2009980,
    AUTHOR = {Novikov, A. and Frishling, V. and Kordzakhia, N.},
     TITLE = {Time-dependent barrier options and boundary crossing
              probabilities},
   JOURNAL = {Georgian Math. J.},
  FJOURNAL = {Georgian Mathematical Journal},
    VOLUME = {10},
      YEAR = {2003},
    NUMBER = {2},
     PAGES = {325--334},
}
		
@article {Nov99,
    AUTHOR = {Novikov, A. and Frishling, V. and Kordzakhia, N.},
     TITLE = {Approximations of boundary crossing probabilities for a
              {B}rownian motion},
   JOURNAL = {J. Appl. Probab.},
  FJOURNAL = {Journal of Applied Probability},
    VOLUME = {36},
      YEAR = {1999},
    NUMBER = {4},
     PAGES = {1019--1030},
      ISSN = {0021-9002},
     CODEN = {JPRBAM},
   MRCLASS = {60J65 (60G40 65C50 65D30)},
  MRNUMBER = {1742147 (2001f:60090)},
MRREVIEWER = {Jos{\'e} Rafael Le{\'o}n},
}

@article {Durb92,
    AUTHOR = {Durbin, J.},
     TITLE = {The first-passage density of the {B}rownian motion process to
              a curved boundary},
   JOURNAL = {J. Appl. Probab.},
  FJOURNAL = {Journal of Applied Probability},
    VOLUME = {29},
      YEAR = {1992},
    NUMBER = {2},
     PAGES = {291--304},
}

@incollection {Ledoux,
    AUTHOR = {Ledoux, M.},
     TITLE = {Isoperimetry and {G}aussian analysis},
 BOOKTITLE = {Lectures on probability theory and statistics
              ({S}aint-{F}lour, 1994)},
    SERIES = {Lecture Notes in Math.},
    VOLUME = {1648},
     PAGES = {165--294},
 PUBLISHER = {Springer-Verlag},
   ADDRESS = {Berlin},
      YEAR = {1996},
   MRCLASS = {60-02 (60E15 60F10 60G15)},
  MRNUMBER = {1600888 (99h:60002)},
MRREVIEWER = {Qi Man Shao},
}
		
@book{bioks,
author="Biagini, F. and Hu, Y. and {\O}ksendal, B. and Zhang, T.",
title="{Stochastic calculus for fractional Brownian motion and applications.}",
language="English",
publisher="{Probability and its Applications. London: Springer}",
year="2008",
}

@preamble{
   "\def\cprime{$'$} "
}
@book {Mandjes07,
    AUTHOR = {Mandjes, M.},
     TITLE = {Large deviations for {G}aussian queues},
 PUBLISHER = {John Wiley \& Sons Ltd.},
   ADDRESS = {Chichester},
      YEAR = {2007},
     PAGES = {x+326},
}
@incollection {nualart,
    AUTHOR = {Nualart, D.},
     TITLE = {Stochastic integration with respect to fractional {B}rownian
              motion and applications},
 BOOKTITLE = {In: Gonz´alez-Barrios, J.M. et al. (eds), Stochastic models ({M}exico {C}ity, 2002)},
    SERIES = {Contemp. Math.},
    VOLUME = {336},
     PAGES = {3--39},
 PUBLISHER = {Amer. Math. Soc.},
   ADDRESS = {Providence, RI},
      YEAR = {2003},
   MRCLASS = {60H05 (60G18 60H07 60H10)},
  MRNUMBER = {2037156 (2004m:60119)},
MRREVIEWER = {Dominique L{\'e}pingle},
}

@article {Csaki,
    AUTHOR = {Cs{\'a}ki, E. and Khoshnevisan, D. and Shi, Z.},
     TITLE = {Boundary crossings and the distribution function of the
              maximum of {B}rownian sheet},
   JOURNAL = {Stochastic Process. Appl.},
  FJOURNAL = {Stochastic Processes and their Applications},
    VOLUME = {90},
      YEAR = {2000},
    NUMBER = {1},
     PAGES = {1--18},
      ISSN = {0304-4149},
     CODEN = {STOPB7},
   MRCLASS = {60G60 (60G70)},
  MRNUMBER = {1787122 (2001m:60115)},
MRREVIEWER = {Robert G. Smits},
}

@article {Somayasa13,
    AUTHOR = {Somayasa, W.},
     TITLE = {The partial sums of the least squares residuals of spatial observations sampled according to a probability measure},
   JOURNAL = {J. Indones. Math. Soc.},
    VOLUME = {19},
      YEAR = {2013},
    NUMBER = {1},
     PAGES = {23–-40},
}

@article {BischMult,
    AUTHOR = {Bischoff, W. and Somayasa, W.},
     TITLE = {The limit of the partial sums process of spatial least squares
              residuals},
   JOURNAL = {J. Multivariate Anal.},
  FJOURNAL = {Journal of Multivariate Analysis},
    VOLUME = {100},
      YEAR = {2009},
    NUMBER = {10},
     PAGES = {2167--2177},
      ISSN = {0047-259X},
   MRCLASS = {60F05 (60G15 60G60 62G10 62J05 62M30)},
  MRNUMBER = {2560361 (2010m:60075)},
MRREVIEWER = {A. Ya. Olenko},
}

\end{filecontents*}

\usepackage{color}

\definecolor{c20}{rgb}{0.,0.7,0.}
\definecolor{c30}{rgb}{0.,0.,1.}
\definecolor{c40}{rgb}{1,0.1,0.7}
\definecolor{c50}{rgb}{1,0,0}
\definecolor{c20}{rgb}{0.,0.7,0.}
\definecolor{c30}{rgb}{0.,0.,1.}
\definecolor{c40}{rgb}{1,0.1,0.7}
\definecolor{c50}{rgb}{1,0,0}
\def\IF{\infty}

\newcommand{\EE}[1]{\mathbb{E}\left\{#1\right\}}

\def\iE#1{\textcolor{c20}{#1}}
\def\iE#1{#1}

\newcommand{\kb}[1]{\boldsymbol{#1}}
\newcommand{\vk}[1]{\kb{#1}}

\newcommand{\R}{\mathbb{R}}

\newcommand{\inr}{\in \R}

\newcommand{\BQN}{\begin{eqnarray}}
\newcommand{\EQN}{\end{eqnarray}}
\newcommand{\BQNY}{\begin{eqnarray*}}
\newcommand{\EQNY}{\end{eqnarray*}}

\newcommand{\BT}{\begin{theo}}
\newcommand{\ET}{\end{theo}}
\newcommand{\BK}{\begin{korr}}
\newcommand{\EK}{\end{korr}}

\newcommand{\BEL}{\begin{lem}}
\newcommand{\EEL}{\end{lem}}

\def\pE#1{#1}

\newtheorem{theo}{Theorem}[section]

\newcommand{\COM}[1]{}

\topmargin-1.4cm
\setlength{\oddsidemargin}{-0.3 cm}
\setlength{\evensidemargin}{-0.3 cm}
\setlength{\textwidth}{17 cm}
\setlength{\textheight}{24 cm}
\setlength{\parindent}{0cm}


\newcommand{\norm}[1]{\lVert #1 \rVert}

\def\wF{{\wwF}}
\def\rE{\rangle}
\def\rEE{\rangle}
\def\nO{\|}
\def\nOO{\|}
\def\kHA{\mathcal{H}_1}
\def\kHB{\mathcal{H}_2}
\def\kHC{\mathcal{H}_{2,+}}
\newcommand{\pk}[1]{\mathbb{P} \left\{ #1 \right \} }


\begin{document}

\centerline{\bf \large
Boundary Non-Crossings of Additive Wiener Fields}

 \bigskip
\centerline{Enkelejd  Hashorva\footnote{Department of Actuarial Science, University of Lausanne, UNIL-Dorigny, 1015 Lausanne, Switzerland,
email:enkelejd.hashorva@unil.ch} and
Yuliya Mishura\footnote{Department of Probability, Statistics and Actuarial Mathematics, National Taras Shevchenko University of Kyiv, 01601 Volodymyrska 64, Kyiv, Ukraine, email: myus@univ.kiev.ua }}

\bigskip
\centerline{\today{}}

{\bf Abstract}:
Let $W_i=\{W_i(t), t\in \R_+\}, i=1,2$ be two Wiener processes and  $W_3=\{W_3(\mathbf{t}), \mathbf{t}\in \R_+^2\}$ be a two-parameter Brownian sheet, all three processes being mutually independent. We derive upper and lower bounds for the  boundary non-crossing probability $$P_f=P\{W_1(t_1)+W_2(t_2)+W_3(\mathbf{t})+h(\mathbf{t})\leq u(\mathbf{t}), \mathbf{t}\in\R_+^2\},$$
where  $h, u: \R_+^2\rightarrow \R_+$ are two measurable functions. We show further that for large trend functions $\gamma f>0$ asymptotically when $\gamma \to \IF$ we have that
$\ln P_{\gamma f}$ is the same as
$\ln P_{\gamma \underline{f}}$ where $\underline{f}$ is the projection of $f$ on some closed convex set of the reproducing kernel Hilbert Space of $W$.
It turns out that our approach is applicable also  for the additive Brownian pillow.

{\bf Key words}:Boundary non-crossing probability; reproducing kernel Hilbert space; additive Wiener field; polar cones; logarithmic asymptotics;
 Brownian sheet, Brownian pillow.

{\bf AMS Classification:}\  Primary 60G70; secondary 60G10

%
\section{INTRODUCTION}\label{s:1}
Calculation of boundary non-crossing probabilities of Gaussian processes is a key topic both of theoretical and  applied probability, see, e.g., 
\cite{Durb92,Wang97,Nov99, Wang2001,MR2009980,BNovikov2005,MR2028621,1103.60040,1137.60023,1079.62047,MR2576883,janssen}
and the references therein. Numerous applications concerned with the evaluation of boundary non-crossing probabilities relate  to mathematical finance, risk theory, queueing theory, statistics, physics among many other fields. Also calculation of boundary non-crossing  probabilities of random fields are considered in various contexts, see e.g., \cite{Pit96, Csaki, Pillow, Somayasa13}. Unlike the previous papers,  we consider in this contribution the general model consisting of three components that include a standard Brownian sheet and two independent Wiener processes. We can not apply the methods proposed for Brownian pillow since they are based \pE{on the fact that it vanishes} on some rectangle. Therefore, we modify essentially the methods from \cite{BiHa1, MR2028621, Pillow} to meet the properties of our model, and in that context some additional conditions are introduced in our main result. The choice of the model is quite natural.  Indeed, on one hand, the model consists of three Gaussian processes that are independent, have continuous trajectories and independent increments, so the model is clear and tractable. On the other hand, arbitrary function\pE{s} defined on the positive \pE{quadrant}, can be decomposed \pE{uniquely} into three components, two of them representing its behavior on the axes and the third component being zero on the axes.

\begin{de} Brownian sheet $\widetilde{W}=\{\widetilde{W}(\mathbf{t}), \mathbf{t}\in \R_+^2\}$ is a Gaussian field with zero mean and covariance function
$$ \EE{\widetilde{W}(\mathbf{t})\widetilde{W}(\mathbf{s}}=(s_1\wedge t_1)(s_2\wedge t_2).$$
\end{de}  Evidently, Brownian sheet is zero on the axes and in what follows we shall consider its continuous modification.

Let $W_i=\{W_i(t), t\in \R_+\}, i=1,2$ be two Wiener processes  and let
$W_3=\{W_3(\mathbf{t}), \mathbf{t}\in \R_+^2\}$ be a Brownian sheet.  For two measurable functions $f,u:\R_+^2\rightarrow \R$
we shall investigate the boundary non-crossing probability
$$P_f=\pk{f(\mathbf{t})+W(\mathbf{t})\leq u(\mathbf{t}),\;\mathbf{t}\in\R_+^2},$$
with  $W$ an additive Wiener field  defined by 
\begin{equation}\label{eqhm-1}
W(\mathbf{t})= W_1(t_1)+W_2(t_2)+W_3(\mathbf{t}), \quad \mathbf{t} \in\R^2_+,
\end{equation}
where we assume that $W_1,W_2,W_3$ are mutually independent.  Clearly, the additive Wiener field $W$ is a centered Gaussian field with covariance function
\begin{equation}\label{eqhm-3}
\EE{W(\mathbf{s})W(\mathbf{t})}=s_1\wedge t_1+s_2\wedge t_2+(s_1\wedge t_1)(s_2\wedge t_2), \quad
\mathbf{s}=(s_1,s_2), \mathbf{t}=(t_1,t_2).
\end{equation}

As it is commonly the case for random fields, also for the additive Wiener field
explicit calculations of boundary non-crossing probabilities \pE{are not available} even for the case that both $f,u$ are constant\pE{s}, see e.g., \cite{Csaki}.  Therefore in our analysis we shall derive upper and lower bounds considering general measurable function\pE{s}  $u$ and function $f$   from the reproducing kernel Hilbert space (RKHS)
 of $W$ denoted by $\kHC$. In order to determine $\kHC$ we need to recall first the corresponding RKHS of $W_1$, $W_2$ and $W_3$.
It is well-known (see e.g., \cite{berlinet})
that the RKHS of the Wiener process $W_1$, denoted by $\kHA$, is characterized as follows
$$\kHA=\Bigl\{h:\R_+\rightarrow \R\big|h(t)=\int_{[0,t]}h'(s)ds,\quad  h'\in L_2(\R_+, \lambda_1) \Bigr\}, $$
with the inner product $\langle h,g\rangle=\int_{\R_+}h'(s)g'(s)ds$ and the corresponding norm $\|h\|^2=\langle h,h\rangle$. The description of RKHS for $W_2$ is evidently the same.
It is also well-known that the RKHS of the Brownian sheet $W_3$, denoted by $\kHB$, is characterized as follows
$$\kHB= \Bigl\{h:\R_+^2\rightarrow \R\big|h(\mathbf{t})=\int_{[0,\mathbf{t}]}h''(\mathbf{s})d\mathbf{s}, \quad h''\in L_2(\R_+^2, \lambda_2)\Bigr\},$$
with the inner product $\langle h,g\rangle=\int_{\R_+^2}h''(\mathbf{s})g''(\mathbf{s})d\mathbf{s}$ and the
corresponding norm $\|h\|^2=\langle h,h\rangle.$  Here the symbols $\lambda_1$ and $\lambda_2$ stand for the Lebesgue measures  in the $\R_+^1$ and in $\R_+^2$, \pE{respectively}. As shown in Lemma \ref{lem:kHC} in Appendix the RKHS corresponding to the covariance function of the additive Wiener field $W$ given in \eqref{eqhm-3} is
\begin{equation}
\kHC=\Bigl\{h:\R_+^2\rightarrow \R\big|h(\mathbf{t})=h_1(t_1)+h_2(t_2)+h_3(\mathbf{t}),\; \text{where}\;  h_i\in \kHA,  i=1,2 \; \text{and}\; h_3\in \kHB \Bigr\}
\end{equation}\label{eqhm-9}
equipped with the inner product
\begin{equation}\label{eqhm-10}
\langle h,g\rangle=\int_{\R_+}h_1'(s)g_1'(s)ds+  \int_{\R_+}h_2'(s)g_2'(s)ds+ \int_{\R_+^2}h''(\mathbf{s})g''(\mathbf{s})d\mathbf{s}
\end{equation}
and the corresponding norm $\|h\|^2=\langle h,h\rangle.$
For simplicity we used the same notation for the norm and the inner product of $\kHA,\kHB$ and $\kHC$. Note that in the case when $h\in\kHB\cap C^2(\R^2)$ we have that  $h''\pE{(u,s)}=\frac{\partial^2 h(u,s)}{\partial u\partial s}$, and it is the motivation for  the notation $h''$.
As in \cite{HMS12}, a direct application of Theorem 1' in \cite{LiKuelbs}
shows that for any $f\in  \kHC $ we have
\newcommand{\Abs}[1]{\Bigl\lvert #1 \Bigr\rvert}
 \BQN\label{eq:00:2b}
\Abs{P_f - P_0} &\le \frac {1 }{\sqrt{2 \pi}} \norm{ f}.
\EQN
Clearly, the above inequality provides a good bound for the approximation rate of $P_f$ by $P_0$ when $\norm{f}$ is small. \pE{Recall that $P_0$
 cannot be calculated explicitly, however it can be determined with a certain accuracy by using simulations.} In case that we want to compare $P_f$ and $P_g$ for $g\in \kHC$ and $g\ge f$, we obtain further (by Theorem 1' in \cite{LiKuelbs})
 that
\def\wF{\underline{f}}
\BQN\label{eq:WL}
\Phi(\alpha - \norm{g}) \le P_{g}\le P_f \le \Phi(\alpha+ \norm{f}),
\EQN
where $\Phi$ is the distribution of an $N(0,1)$ random variable and $\alpha=\Phi^{-1}(P_0) $ is a finite constant. When $f\le 0$, then we can take always $g=0$ above.
If  $f(\mathbf{t}_0)>0$ for some $\mathbf{t}_0$ with non-negative components, then  the last inequalities are useful when $\norm{f}$ is large.
\pE{Indeed, for any  $g\ge f, g\in \kHC$ using \eqref{eq:WL} we obtain as $\gamma \to \IF$}
\BQNY
\ln P_{\gamma f} \ge    \ln
\Phi(\alpha - \gamma \wF ) \ge  -(1+o(1))\frac{\gamma^2}{2}\norm{g}^2, 
\EQNY
hence
\BQN\label{LD}
{\ln P_{\gamma f} \ge -(1+o(1))\frac{\gamma^2}{2}\norm{\wF}^2, \quad \gamma \to \IF},
\EQN
where $\wF$ (which is unique and exists) satisfies
\BQN \label{OP}
\min_{ g,f \in { \kHC}, g \ge f}   \norm{g}= \norm{\wF}>0.
\EQN

\COM{\pE{Indeed, taking $g=\wF \in \kHC$ where $\wF\ge f$
and further
\BQN \label{OP}
\min_{ g,f \in { \kHC}, g \ge f}   \norm{g}= \norm{\wF},
\EQN
then from \eqref{eq:WL} we obtain}
\BQN\label{LD}
{\ln P_{\gamma f} \ge    \ln
\Phi(\alpha - \gamma \wF ) \ge  -(1+o(1))\frac{\gamma^2}{2}\norm{\wF}^2, \quad \gamma \to \IF}.
\EQN
}
\pE{In Section} 2 we  identify  $\underline{f}$ with  the projection of $f$ on a closed convex set of $\kHC$, and moreover
we show that
\BQN\label{LD2}
{\ln P_{\gamma f} \sim  \ln P_{\gamma \underline{f}} \sim - \frac{\gamma^2}{2}\norm{\underline{f}}^2, \quad \gamma \to \IF}.
\EQN

Our results in this paper are of both theoretical and practical interest. Furthermore, our approach can be applied when dealing instead of the additive Wiener sheet $W$
 with the linear combinations of $W_1,W_2,W_3$. Additionally, our approach is applicable also for the evaluations of boundary non-crossing probabilities of
the additive  Brownian pillow, i.e., when  $W_1,W_2$ are independent Brownian bridges and $W_3$ is a Brownian pillow. For the later case our results are more general than
those in \cite{Pillow}.

Organization of the paper is as follows: We continue below with preliminaries followed then by a section containing the main result. In Appendix we present three
technical lemmas. Lemma 3 contains It\^{o} formula for the product of two fields in the plane, one of them being Brownian shett and another one having bounded variation. It is used in the proof of the main result, Theorem 1. Lemma 4 states that the RKHS of $W$ is determined uniquely and Lemma 5 describes the asymptotic behavior of $h''$ for $h$ from the closed convex subset of $\kHC$ that is used for projection.




\section{Preliminaries}
Recall that \pE{in this paper} bold letters are reserved for vectors, so we shall write for instance
$\mathbf{t}=(t_1, t_2)\in\R^2_+$ and   $\lambda_1$ and $\lambda_2$ denote the Lebesgue measures on $\R_+$ and $\R_+^2$, respectively
whereas  $ds$ and $d\mathbf{s}$ mean integration with respect to these measures.
\COM{We consider three mutually independent processes: two Wiener processes  $W_i=\{W_i(t), t\in \R_+\}, i=1,2$
and a Brownian sheet $W_3=\{W_3(\mathbf{t}), \mathbf{t}\in \R_+^2\}$. Define a new  field $W$ by
\begin{equation}\label{eqhm-1}
W(\mathbf{t})= \{W_1(t_1)+W_2(t_2)+W_3(\mathbf{t}), \mathbf{t} \in\R^2_+\}
\end{equation}
Evidently, $W$ is the centered Gaussian field.}


\subsection{Expansion of one-parameter functions}

The results of  this subsection \pE{were formulated} in a modified form in e.g., in \cite{BiHa1, janssen, Pillow}.  However we shall introduce some modifications (re-writing for instance $V_1$ below) which are important for the two-parameter case. From the derivations below it will become clear how to
obtain expansion of multiparameter functions \pE{of} two components, one of which is  the ``analog of the smallest concave majorant''  and the other one is a negative function. \pE{Specifically,}  when studying the boundary
crossing probabilities of the Wiener process with a deterministic trend $h\in \kHA$, then it has been shown (see \cite{1137.60023}), that
the smallest concave majorant of $h$ solves \eqref{OP} and determines the large deviation asymptotics of this probability. Moreover, as shown in \cite{janssen}
 the smallest concave majorant of $h$, which we denote by $\underline{h}$,
 can be written analytically as the unique projection of $h$ on the  closed convex set
$$V_1=\{h\in \kHA \big|\,h'(s) \; \text{is a non-increasing function} \}$$
i.e., $\underline{h}= Pr_{V_1}h$. Here we write $Pr_{A}h$ for the projection of $h$ on some closed set $A$ also for
other Hilbert spaces considered below.


\begin{lem}\label{lemma 3.1} Let
$\widetilde{V}_1=\{h\in \kHA  \big|\,\langle h,f\rE \leq 0 \;\text{for any}\; f\in V_1\} $ be the polar cone of  $V_1$.
\begin{itemize}
\item[(i)] If $h\in \widetilde{V}_1$,  then $h\leq 0$.
\item[(ii)] We have $\langle Pr_{V_1}h, Pr_{\widetilde{V}_1}h\rangle=0$ and further
\begin{equation}\label{eqhm-6}
h=Pr_{V_1}h+Pr_{\widetilde{V}_1}h.
\end{equation}
\item[(iii)] If $h=h_1+h_2$, $h_1\in V_1$, $h_2\in \widetilde{V}_1$ and $\langle h_1,  h_2\rangle=0$, then $h_1=Pr_{V_1}h$ and $h_2=Pr_{\widetilde{V}_1}h$.
\item[(iv)] The unique solution of the minimization problem $\min_{g\geq h, g\in \kHA }\norm{g}$ is $\underline{h}=Pr_{V_1}h$.

    \end{itemize}
\end{lem}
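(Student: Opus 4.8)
The plan is to recognize $V_1$ as a nonempty closed convex cone in $\kHA$: sums and non-negative multiples of functions with non-increasing derivative again have non-increasing derivative, and closedness follows since $L_2$-convergence of derivatives gives a.e.\ convergence along a subsequence, under which monotonicity is preserved. With this in hand, items (ii) and (iii) are the conic Moreau decomposition and its uniqueness, valid for any closed convex cone and its polar, whereas (i) and (iv) are where the concrete description of $V_1$ as derivatives of concave functions vanishing at $0$ is used. I would establish (i) first, since it feeds into (iv). For (i), I exploit the reproducing kernel $K_t(s) = s\wedge t$ of $\kHA$: because $K_t'(s) = \1_{[0,t]}(s)$ is non-increasing, $K_t \in V_1$, so for $h \in \widetilde V_1$ the defining inequality together with the reproducing property gives $h(t) = \langle h, K_t\rangle \le 0$ for every $t$, which is all of (i).

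For (ii) I set $p = Pr_{V_1} h$ and use the variational characterization of the metric projection onto a closed convex set, namely $\langle h - p, x - p\rangle \le 0$ for all $x \in V_1$. Testing with $x = 2p$, $x = 0$ and $x = p + y$ for $y \in V_1$ specializes this, for the cone $V_1$, to the two relations $\langle h - p, p\rangle = 0$ and $h - p \in \widetilde V_1$. I then check that $z := h - p$ satisfies the projection inequality for $\widetilde V_1$: for any $w \in \widetilde V_1$, $\langle h - z, w - z\rangle = \langle p, w\rangle - \langle p, z\rangle = \langle p, w\rangle \le 0$, the last step using $p \in V_1$ and $w \in \widetilde V_1$. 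Hence $z = Pr_{\widetilde V_1} h$, which yields both assertions of (ii). Part (iii) is the uniqueness of this decomposition: given $h = h_1 + h_2$ with $h_1 \in V_1$, $h_2 \in \widetilde V_1$ and $\langle h_1, h_2\rangle = 0$, for each $x \in V_1$ we get $\langle h - h_1, x - h_1\rangle = \langle h_2, x\rangle - \langle h_2, h_1\rangle = \langle h_2, x\rangle \le 0$, so $h_1 = Pr_{V_1} h$, and the statement for $h_2$ is symmetric.

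For (iv), (ii) and (i) together give feasibility, $p - h = -Pr_{\widetilde V_1} h \ge 0$. For an arbitrary competitor $g \in \kHA$ with $g \ge h$, I expand $\norm{g}^2 = \norm{p}^2 + 2\langle p, g - p\rangle + \norm{g - p}^2$; since $\langle p, p - h\rangle = \langle p, Pr_{\widetilde V_1} h\rangle = 0$, matters reduce to showing $\langle p, g - h\rangle \ge 0$. I expect this to be the main obstacle, and it is precisely where concavity enters: as $p' \in L_2(\R_+)$ is non-increasing it is non-negative with $p'(\infty)=0$, so $p'(s) = \mu([s,\infty))$ for the non-negative measure $\mu = -dp'$; writing $\phi = g - h \ge 0$ with $\phi(0)=0$ and applying Fubini gives $\langle p, g - h\rangle = \int_0^\infty p'(s)\phi'(s)\,ds = \int_{[0,\infty)} \phi(r)\,\mu(dr) \ge 0$. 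Thus $\norm{g}^2 \ge \norm{p}^2$ with equality only when $g = p$, proving that $\underline h = p$ is the unique minimizer. Everything else reduces either to the abstract conic Moreau machinery or to the one-line reproducing-kernel argument of (i); the representation of $p'$ through its generating measure $-dp'$ and the Fubini exchange is the step converting the abstract projection into the pointwise ordering $p \ge h$ and the minimality of $\norm{p}$.
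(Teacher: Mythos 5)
Your proof is correct, and it diverges from the paper's at two of the four items in a genuinely different way. For part (i) the paper does not touch the reproducing kernel: it fixes $T>0$, sets $A=\{s:h(s)>0\}$, builds the test function $v$ with $v'(s)=\int_{[s,T]}h(u)\1_{u\in A}\,du\,\1_{s\leq T}$, verifies $v\in V_1$ (including the $L_2$ estimates), and extracts $\int_{[0,T]}h^2(u)\1_{u\in A}\,du\le 0$ from $\langle h,v\rangle\le 0$, finishing with an a.e.\ argument plus continuity of $h$. Your observation that $K_t(s)=s\wedge t$ has derivative $\1_{[0,t]}$, which is non-increasing, so that $K_t\in V_1$ and $h(t)=\langle h,K_t\rangle\le 0$ for every $t$, is a genuine shortcut: it gives the pointwise conclusion directly and dispenses with both the bespoke test function and the continuity step. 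Parts (ii)--(iii) the paper simply quotes from \cite{janssen} as facts ``valid for any Hilbert space''; your variational-inequality derivation of the Moreau decomposition is the standard proof of exactly those facts, so you differ only in being self-contained. For (iv) the two arguments are close cousins: the paper integrates by parts on $[0,t_n]$, writing $\int_{[0,t_n]}\underline h'(u)\varphi'(u)\,du=\varphi(t_n)\underline h'(t_n)-\int_{[0,t_n]}\varphi(u)\,d(\underline h'(u))$ and letting $t_n\to\infty$, both terms being non-negative because $\underline h'\ge 0$, $d\underline h'\le 0$ and $\varphi\ge 0$, while you encode the same monotonicity in the measure $\mu=-dp'$ and a Fubini exchange; the norm expansion with the orthogonality $\langle \underline h, Pr_{\widetilde V_1}h\rangle=0$ is identical in both. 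The one point you should spell out is the legitimacy of the Fubini step, since $\phi'$ is signed; it is automatic, because by Tonelli $\int_{[0,\infty)}\int_0^\infty \abs{\phi'(s)}\1_{s\le r}\,ds\,\mu(dr)=\int_0^\infty \abs{\phi'(s)}\,p'(s)\,ds\le \norm{\phi'}_{L_2}\norm{p'}_{L_2}<\infty$, using $p'(s)=\mu([s,\infty))$ and Cauchy--Schwarz --- one line, but it belongs in the proof (the paper's $t_n$-limit formulation avoids needing it). With that sentence added your argument is complete, buys a cleaner (i), and trades the paper's boundary-term analysis in (iv) for a measure-theoretic layer-cake computation of equal weight.
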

\begin{proof} In the following for a given real-valued function $\varphi$
we denote its one-parameter increment $\Delta_s^1\varphi(t)=\varphi(t)-\varphi(s)\;,0\leq s\leq t<\infty$. With this   notation we
can re-write $V_1$ as $$V_1=\{h\in \kHA  \big|\,\Delta_s^1 h'(t)\leq 0\;, 0\leq s\leq t<\infty\}.$$
Let $h\in \widetilde{V}_1$ and define  $A=\{s\inr_+: h(s)>0\}$. Fix $T>0$ and consider the function $v$ such that
$$v'(s)=\int_{[s,T]} h(u)\pE{1_{u\in A}} du1_{s\leq T}.$$
For any $0\leq s\leq t< \infty$ we have $\Delta_s^1v'(t)=-\int_{[s\wedge T,t\wedge T]}h(u)\pE{1_{u\in A}} du\leq 0$ and further
\BQNY
\int_{\R_+}|v'(s)^2|ds &=&\int_{[0,T]}\Big(\int_{[s,T]}h(u) \pE{1_{u\in A}} du\Big)^2ds\\
&\leq & T^2 \int_{[0,T]}h^2(u)du\\
&=&T^2 \int_{[0,T]}\Big(\int_{[0,u]}h'(s)ds\Big)^2du\\
&\leq & T^4 \int_{\R_+}(h'(s))^2ds\\
&<& \infty.
\EQNY
Consequently, $v'\in L_2(\R_+, \lambda_1), v(s)=\int_{[0,s]}v'(u)du\in \kHA $ \pE{and further} $v\in V_1$. Therefore,
\BQN \label{eqhm-8}
 0&\geq &\langle h, v\rangle\notag\\
 &=&\int_{\R_+ }h'(s)v'(s)ds\\
 &=&\int_{[0,T]}h'(s)\int_{[s,T]}h(u)\pE{1_{u\in A}}duds\notag\\
 &=&\int_{[0,T]}h(u)\pE{1_{u\in A}}\int_{[0,u]}h'(s)dsdu \notag\\
 &=&\int_{[0,T]} h^2(u)\pE{1_{u\in A}}du
 \EQN
 \def\unH{\underline{h}}
implying that $\pE{1_{u\in A}}=0$ a.e. $\lambda_1$, in other words, $h(u)\leq 0$ a.e. $\lambda_1$. However, $h$ is a continuous function and therefore $h(u)\leq 0$ for any $u$.\\
Statements $(ii)$ and $(iii)$ follow immediately from \cite{janssen} and are valid for any Hilbert space.\\
$(iv)$  Write $$ f= h+ \varphi= \underline{h}+ \varphi+ h- \underline{h} =\underline{h}+ \varphi+ Pr_{\widetilde{V}_1} h$$
and suppose that $f\in \kHA$ and $\varphi\ge 0$. Note that for any function $g\in V_1$ its derivative $g'$
 is non-increasing therefore  \pE{$g'$ } is  non-negative and \pE{$\lim_{t\to \IF} g'(t)=0$.} Since $\varphi\ge 0$, then for any sequence
  $t_n\rightarrow \infty$ \pE{we have}
  $$\lim_{n\rightarrow \infty}\varphi(t_n)\unH'(t_n)\geq 0,$$
which implies \BQN\label{eqhm-4}
\langle \unH,\varphi\rE &=&\int_{\R_+}\unH '(u)\varphi'(u)du \notag\\
&=&\lim_{n\rightarrow \infty}\int_{[0,t_n]}\unH'(u)\varphi'(u)du\notag\\
&=&\lim_{n\rightarrow \infty}\Big(\varphi(t_n)\unH' (t_n)
-\int_{[0,t_n]}\varphi(u)d(\unH'(u))\Big)\notag\\
&\ge &\lim_{n\rightarrow \infty}\Big(-\int_{[0,t_n]}\varphi(u)d(\unH'(u))\Big)\notag\\
&\geq& 0.
\EQN
\COM{
Now, consider the sequence $g_n(u)=g(u)1_{u\in[0,n]},n\ge 1$. Since $g_n'(u)=g'(u)1_{u\in[0,n]}$  a.e. $\lambda_1$ then \eqref{eqhm-4} entails
$$\int_{\R_+} \unH'(u)g_n'(u)du\geq 0.$$ Therefore
\BQNY \label{eqhm-5}
 \langle \unH ,g\rE& =&\int_{\R_+}\unH'(u)g'(u)du\\
&=&\lim_{n\rightarrow \infty}\int_{[0,n]}\unH '(u)g'(u)du\\
&=&\lim_{n\rightarrow \infty}\int_{[0,n]}\unH '(u)g_n'(u)du\\
&\geq & 0,
\EQNY
}
Consequently,
\BQNY
 \|f\nO ^2=\|h+\varphi\nO ^2 &=& \norm{\underline{h}+ \varphi+ Pr_{\widetilde{V}_1} h}^2\\
 &=& \norm{\underline{h}}^2+ 2\langle  \underline{h}, \varphi \rangle
+ 2\langle  \underline{h}, Pr_{\widetilde{V}_1} h \rangle  + \norm{ \varphi+ Pr_{\widetilde{V}_1} h}^2\\
 &=& \norm{\underline{h}}^2+ 2\langle  \underline{h}, \varphi \rangle + \norm{ \varphi+ Pr_{\widetilde{V}_1} h}^2\\
&\geq& \|\unH\nO ^2
\EQNY
establishing the proof.
\end{proof}

\subsection{Expansion of two-parameter functions}
 For some given measurable function $\varphi:\R_+^2\rightarrow \R$ we define
 $$\Delta_\mathbf{s}\varphi(\mathbf{t})=\varphi(\mathbf{t})-\varphi(s_1, t_2)-\varphi(t_1,s_2)+\varphi(\mathbf{s}),$$
$$\Delta^1_\mathbf{s}\varphi(t_1,s_2)=\varphi(t_1,s_2)-\varphi(\mathbf{s}),\quad \Delta^2_\mathbf{s}\varphi(s_1,t_2)=\varphi(s_1,t_2)-\varphi(\mathbf{s}).$$
In our notation $\mathbf{s}=(s_1,s_2)\leq \mathbf{t}=(t_1,t_2)$ means that $s_1\leq t_1$ and $s_2\leq t_2$.
Define the closed convex set
$$V_2=\{h \in \kHB \big|\Delta_\mathbf{s}h''(\mathbf{t})\geq 0,\;\Delta^1_\mathbf{s}h''(t_1,s_2)\leq 0,\;\Delta^2_\mathbf{s}h''(s_1,t_2)\leq 0\;\text{for any}\; \mathbf{s}\leq\mathbf{t}\; \text{and}\;\mathbf{t}\in \R^2_+ \}$$
and let $\widetilde{V}_2$ be the polar cone of $V_2$, namely
$$\widetilde{V}_2=\{h \in \kHB \big|\langle h, v\rEE  \leq 0 \;\text{for any}\; v\in V_2\}.$$
Below we derive the expansion for two-parameter functions. Since the results are very similar to the previous lemma,
we shall prove only those statements that differ in details from Lemma \ref{lemma 3.1}.
\begin{lem}\label{lemma 3.2}\begin{itemize}
\item[(i)] If  $h\in \widetilde{V}_2$, then $h\leq 0$.
\item[(ii)] We have $\langle Pr_{V_2}h, Pr_{\widetilde{V}_2}h\rEE  =0$ and \begin{equation*}\label{eqhm-7}
h=Pr_{V_2}h+Pr_{\widetilde{V}_2}h.
\end{equation*}
\item[(iii)] If $h=h_1+h_2$, $h_1\in V_2$, $h_2\in \widetilde{V}_2$ and $\langle h_1,  h_2\rEE  =0$, then $h_1=Pr_{V_2}h$ and $h_2=Pr_{\widetilde{V}_2}h$.
\item[(iv)] The unique solution of the minimization problem $\min_{g\geq h, g\in \kHB }\|g\nOO $ is $\underline{h}=Pr_{V_2}h$.
    \end{itemize}
\end{lem}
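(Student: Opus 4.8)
The plan is to follow the blueprint of Lemma~\ref{lemma 3.1} and to spell out only the two places where the planar geometry forces a genuinely new argument, namely (i) and (iv). As announced just before the statement, items (ii) and (iii) are abstract facts about the orthogonal decomposition of a Hilbert space induced by a closed convex cone and its polar, so I would quote \cite{janssen} for $\kHB$ verbatim, exactly as was done for $\kHA$; nothing changes in that argument.

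For (i), fix $\mathbf{T}=(T_1,T_2)\in\R_+^2$, set $A=\{\mathbf{s}\in\R_+^2:\,h(\mathbf{s})>0\}$ and $g=h\,1_{\{A\}}\ge 0$, and define the competitor through its mixed derivative
\begin{equation*}
v''(\mathbf{s})=1_{\{\mathbf{s}\le\mathbf{T}\}}\int_{[\mathbf{s},\mathbf{T}]}g(\mathbf{u})\,d\mathbf{u},\qquad v(\mathbf{t})=\int_{[0,\mathbf{t}]}v''(\mathbf{s})\,d\mathbf{s}.
\end{equation*}
A direct computation of the three increments yields $\Delta_{\mathbf{s}}v''(\mathbf{t})=\int_{[\mathbf{s},\mathbf{t}]}g\ge 0$, $\Delta^1_{\mathbf{s}}v''(t_1,s_2)=-\int_{s_1}^{t_1}\!\int_{s_2}^{T_2}g\le 0$ and $\Delta^2_{\mathbf{s}}v''(s_1,t_2)=-\int_{s_1}^{T_1}\!\int_{s_2}^{t_2}g\le 0$, so $v\in V_2$ once $v''\in L_2(\R_+^2,\lambda_2)$ is checked; the latter follows from two applications of Cauchy--Schwarz, which bound $\int v''^2$ by a finite ($\mathbf{T}$-dependent) multiple of $\norm{h}^2$, just as in the one-parameter estimate in the proof of Lemma~\ref{lemma 3.1}. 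Since $h\in\widetilde V_2$, Fubini's theorem together with $h(\mathbf{u})=\int_{[0,\mathbf{u}]}h''$ then gives
\begin{equation*}
0\ge\langle h,v\rangle=\int_{[0,\mathbf{T}]}h^2(\mathbf{u})\,1_{\{A\}}(\mathbf{u})\,d\mathbf{u},
\end{equation*}
whence $1_{\{A\}}=0$ a.e.\ on $[0,\mathbf{T}]$; letting $T_1,T_2\to\infty$ and using continuity of $h$ gives $h\le 0$ everywhere.

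For (iv), I would write $f=h+\varphi$ with $\varphi\ge 0$, $f\in\kHB$, and decompose $h=\underline{h}+Pr_{\widetilde V_2}h$ as in (ii), so that $f=\underline{h}+\varphi+Pr_{\widetilde V_2}h$. Using $\langle\underline{h},Pr_{\widetilde V_2}h\rangle=0$ from (ii), the square norm expands to $\norm{f}^2=\norm{\underline{h}}^2+2\langle\underline{h},\varphi\rangle+\norm{\varphi+Pr_{\widetilde V_2}h}^2$, so the whole claim reduces to the single inequality $\langle\underline{h},\varphi\rangle\ge 0$. To establish it I would transform $\langle\underline{h},\varphi\rangle=\int_{\R_+^2}\underline{h}''\,\varphi''\,d\mathbf{s}$ by a planar integration-by-parts (Green-type) formula, transferring the two derivatives off $\varphi$. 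Because $\varphi\in\kHB$ vanishes on the coordinate axes, the edge contributions along $\{s_1=0\}$ and $\{s_2=0\}$ drop out, and the bulk term becomes $\int_{\R_+^2}\varphi\,d\bigl(\Delta\underline{h}''\bigr)$, which is non-negative since $\varphi\ge 0$ and $\underline{h}\in V_2$ forces $\Delta_{\mathbf{s}}\underline{h}''\ge 0$; the remaining far-edge contributions along $\{s_i=T_i\}$ carry favourable signs coming from $\Delta^1_{\mathbf{s}}\underline{h}''\le 0$ and $\Delta^2_{\mathbf{s}}\underline{h}''\le 0$, again tested against $\varphi\ge 0$.

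The main obstacle is exactly the control of the boundary terms produced by the two-dimensional integration by parts: unlike the one-parameter identity \eqref{eqhm-4}, there are now contributions along the far edges $\{s_i=T_i\}$ and at the corner $\mathbf{T}$, and one must show that, after letting $T_1,T_2\to\infty$, they do not spoil the sign. This is where I would invoke the asymptotic decay estimates for $\underline{h}''$ of functions in the projection cone established in the Appendix, which force the far-edge and corner terms to vanish (or retain a favourable sign) in the limit, leaving only the non-negative bulk contribution and hence $\langle\underline{h},\varphi\rangle\ge 0$. Combined with the expansion of $\norm{f}^2$ above, this proves that $\underline{h}=Pr_{V_2}h$ is the unique minimiser, completing (iv).
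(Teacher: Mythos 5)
Your proposal is correct and follows the paper's own route: for (i) you build exactly the same competitor $v$ with $v''(\mathbf{s})=1_{\{\mathbf{s}\le \mathbf{T}\}}\int_{[\mathbf{s},\mathbf{T}]}h(\mathbf{u})1_{\{\mathbf{u}\in A\}}d\mathbf{u}$, verify the cone and $L_2(\R_+^2,\lambda_2)$ conditions, and conclude $\int_{[\mathbf{0},\mathbf{T}]}h^2 1_{\{A\}}=0$ via Fubini, while (ii)--(iii) are quoted from \cite{janssen} exactly as in the paper. For (iv), where the paper only says the details follow as in Lemma \ref{lemma 3.1}, your reduction to $\langle \underline{h},\varphi\rangle\ge 0$ via the planar integration-by-parts formula, using $\Delta_{\mathbf{s}}\underline{h}''\ge 0$, $\Delta^1_{\mathbf{s}}\underline{h}''\le 0$, $\Delta^2_{\mathbf{s}}\underline{h}''\le 0$, the vanishing of $\varphi$ on the axes, and the monotonicity/decay of $\underline{h}''$ (cf.\ Lemmas \ref{lemma5.1} and \ref{lem5} in the Appendix) is precisely the intended two-parameter analogue, and your remark that a favourable sign of the far-edge and corner terms suffices (full vanishing is not needed, since the inner product over $[\mathbf{0},\mathbf{T}]$ converges to $\langle\underline{h},\varphi\rangle$ anyway) is sound.
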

\begin{proof} We prove only statement $(i)$. Denote $\mathbf{T}=(T,T), T>0$ and consider the function $v$ with $$v''(\mathbf{s})=\int_{[\mathbf{s},\mathbf{T}]}h(\mathbf{u})\pE{ 1_{\mathbf{u}\in \mathbf{A}}}d\mathbf{u}1_{\mathbf{s}\leq \mathbf{T}},$$
where $\mathbf{A}=\{\mathbf{s}\in\R_+^2\big|h(\mathbf{s})\geq 0\}.$ Then for any $\mathbf{0}\leq \mathbf{s}\leq \mathbf{t}$
\BQNY
\Delta_\mathbf{s}^1v''(t_1,s_2)&=&-\int_{[\mathbf{s}\wedge\mathbf{T},(t_1\wedge T,T)]}h(\mathbf{u}) \pE{ 1_{\mathbf{u}\in \mathbf{A}}} d\mathbf{u}\leq 0,\\\Delta_\mathbf{s}^1v''(s_1,t_2)&=&-\int_{[\mathbf{s}\wedge\mathbf{T},(T,t_2\wedge T)]}h(\mathbf{u})\pE{ 1_{\mathbf{u}\in \mathbf{A}}}d\mathbf{u}\leq 0,\\ \Delta^2_\mathbf{s}v''(\mathbf{t})
&=&\int_{[\mathbf{s}\wedge\mathbf{T},\mathbf{t}\wedge\mathbf{T}]}h(\mathbf{u})\pE{ 1_{\mathbf{u}\in \mathbf{A}}} d\mathbf{u}\geq 0.
\EQNY
Furthermore,
\BQNY
\int_{\R_+^2}|v''(\mathbf{s})^2|d\mathbf{s}&=&\int_{[\mathbf{0},\mathbf{T}]}\Big(\int_{[\mathbf{s},\mathbf{T}]}
h(\mathbf{u})\pE{ 1_{\mathbf{u}\in \mathbf{A}}} d\mathbf{u}\Big)^2d\mathbf{s}\\
&\leq & T^4 \int_{[\mathbf{0},\mathbf{T}]}h^2(\mathbf{u})d\mathbf{u}\\
&=&T^4 \int_{[\mathbf{0},\mathbf{T}]}\Big(\int_{[\mathbf{0},\mathbf{u}]}h''(\mathbf{s})d\mathbf{s}\Big)^2d\mathbf{u}\\
&\leq &T^8 \int_{\R_+^2}(h''(\mathbf{s}))^2d\mathbf{s}\\
&<&\infty.
\EQNY
Consequently,
$$v''\in L_2(\R_+^2, \lambda_2), \quad v(\mathbf{s})=\int_{[\mathbf{0},\mathbf{s}]}v''(\mathbf{u})d\mathbf{u}\in \kHB $$
  and
further $v\in V_2$. Similarly to \eqref{eqhm-8} we conclude that  $\pE{ 1_{\mathbf{u}\in \mathbf{A}}}=0$ a.e. $\lambda_2$. Other details follow as in the proof of Lemma \ref{lemma 3.1}.
\end{proof}

Since we are going to work with functions $f$ in $\kHC$ we need to consider the projection of such $f$ on a particular closed convex set.
In the following we shall write $f=f_1+f_2+f_3$ meaning that $f(\vk{t})=f_1(t_1)+ f_2(t_2) + f_3(\vk{t})$ where $f_1,f_2 \in \kHA$
and $f_3 \in \kHB$. Note in passing that this decomposition is unique for any $f\in \kHC$.
Define the closed convex set
$$V_{2,+}=\{h=h_1+h_2+ h_3 \in \kHC \big|  h_1,h_2 \in V_1, h_3 \in V_2\}$$
and let $\widetilde{V_{2,+}}$ be the polar cone of $V_{2,+}$ given by
$$\widetilde{V_{2,+}}=\{h \in \kHC \big|\langle h, v\rEE  \leq 0 \;\text{for any}\; v\in V_{2,+}\},$$
with inner product from \eqref{eqhm-10}. It follows that for any  $h=h_1+h_2+ h_3\in \widetilde{V}_2$ we have  $h_i\leq 0,i=1,2$
and $h_3\le 0$. Furthermore, $\langle Pr_{V_{2,+}}h, Pr_{\widetilde{V_{2,+}}}h\rEE  =0$ and
\BQN\label{projV3}
h=Pr_{V_{2,+}}h+Pr_{\widetilde{V_{2,+}}}h.
\EQN
Analogous to Lemma \ref{lemma 3.2} we also have
that for  $h=f+g$, $f\in V_{2,+}$, $g\in \widetilde{V_{2,+}}$ such that $\langle f,  g\rEE  =0$, then $f=Pr_{V_{2,+}}h$ and $g=Pr_{\widetilde{V_{2,+}}}h$.
Moreover, the unique solution of \eqref{OP} 
 is
 \BQN \label{projV3B}
 \underline{h}=Pr_{V_{2,+}}h =Pr_{V_1}h_1+ Pr_{V_1}h_2+ Pr_{V_2}h_3.
 \EQN

\section{Main Result}
Consider two measurable two-parameter functions $f,u:\R_+^2\rightarrow \R$. Suppose that $f(\mathbf{0})=0$ and set
\BQNY
f_1(t_1):=f(t_1,0),\;f_2(t_2):=f(0,t_2),
\;f_3(\mathbf{t}):=f(\mathbf{t})-f(t_1,0)-f(0,t_2)),
\EQNY
hence we can write $f(\mathbf{t})=f(t_1,0)+f(0,t_2)+(f(\mathbf{t})-f(t_1,0)-f(0,t_2)) $.
Let   $f_i\in \kHA ,\;i=1,2$ and $f_3\in \kHB$. Recall their representations  $f_i(t)=\int_{[0,t]}f'_i(s)ds,\quad  f'_i\in L_2(\R_+, \lambda_1), i=1,2,$ and $f_3(\mathbf{t})=\int_{[0,\mathbf{t}]}f''_3(\mathbf{s})d\mathbf{s}, \quad f''_3\in L_2(\R_+^2, \lambda_2)$. We shall estimate the boundary non-crossing probability
\BQNY
 P_f=\pk{f(\mathbf{t})+W(\mathbf{t})\leq u(\mathbf{t}),\;\mathbf{t}\in\R_+^2}.
 \EQNY
 In  the following we set  $\underline{f_i}= Pr_{V_1}f_i,i=1,2$ and $\underline{f_3}= Pr_{V_2} f, \underline{f}= Pr_{V_{2,+}}f$ and define
 $$\pE{ \underline{f_{13}}(t)=  \underline{f_1}'(t)-\underline{f_3}''(t,0), \quad
\underline{f_{23}}(t) =  \underline{f_2}'(t)-\underline{f_3}''(0,t).}$$
  We state next our main result:

\begin{thm} \label{Thn1} Let the following conditions hold:
\begin{itemize}
 \item[(i)] both functions $\underline{f_{13}}(t)$ and $\underline{f_{23}}(t)$ are non-negative and non-increasing in their arguments;

\item[\pE{(ii)}]
\BQN\label{conA1}\lim_{t \rightarrow\infty}u(t,0)\underline{f_{13}}(t) &=&\lim_{t \rightarrow\infty}u(0,t)\underline{f_{23}}(t)=0, \quad
\lim_{t_1, t_2 \to \infty}u(\mathbf{t})\underline{f_3}''(\mathbf{t})=0,
\\
\iE{\lim_{x \rightarrow\infty}\int_{[0,x]} u(x,t)d_t(\underline{f_3}''(x,t))}&=&
\iE{\lim_{x \rightarrow\infty}
\int_{[0,x]} u(s,x)d_s(\underline{f_3}''(s,x))}
=0. \label{conA2}
\EQN
\end{itemize}
Then we have
\begin{equation*}\begin{gathered}
P_f\leq P_{ f-\underline{f} }
\exp\biggl (  - \int_{\R_+}u(t,0)d \underline{f_{13}}(t)
-\int_{\R_+}u(0,t)d \underline{f_{23}}(t) +\int_{\R_+^2}u(\mathbf{t})d \underline{f_3}''(\mathbf{t})
-\frac12\|\underline{f}\nOO ^2\biggr).
\end{gathered}
\end{equation*}
\end{thm}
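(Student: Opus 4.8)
The plan is to strip off the ``large'' part $\underline f$ of the trend by a Cameron--Martin shift and then, pathwise on the non-crossing event, to control the Gaussian functional that the shift produces. Since $\underline f = Pr_{V_{2,+}}f\in\kHC$ lies in the RKHS of $W$, the Cameron--Martin formula gives
\[
P_f = \EE{\1\{(f-\underline f)(\mathbf t) + W(\mathbf t)\le u(\mathbf t),\ \mathbf t\in\R_+^2\}\,\exp\Big(L(\underline f) - \tfrac12\norm{\underline f}^2\Big)},
\]
where $L(\underline f)$ is the first--chaos (Paley--Wiener) integral attached to $\underline f$, namely
\[
L(\underline f) = \int_{\R_+}\underline{f_1}'(s)\,dW_1(s) + \int_{\R_+}\underline{f_2}'(s)\,dW_2(s) + \int_{\R_+^2}\underline{f_3}''(\mathbf s)\,dW_3(\mathbf s).
\]
Writing $A$ for the event in the indicator and
\[
c := -\int_{\R_+}u(t,0)\,d\underline{f_{13}}(t) - \int_{\R_+}u(0,t)\,d\underline{f_{23}}(t) + \int_{\R_+^2}u(\mathbf t)\,d\underline{f_3}''(\mathbf t),
\]
we have $\EE{\1_A e^{L(\underline f)}}\le e^{c}\,\pk{(f-\underline f)+W\le u}=e^{c}P_{f-\underline f}$ as soon as the pathwise inequality $L(\underline f)\le c$ holds on $A$; substituting this into the identity above yields exactly the asserted bound. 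Hence everything reduces to proving $L(\underline f)\le c$ on $A$.

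To obtain this inequality I would integrate by parts in each term of $L(\underline f)$ and then re-express all path values through $W$ on the whole quadrant, using $W_1(t)=W(t,0)$, $W_2(t)=W(0,t)$ and $W_3(\mathbf t)=W(\mathbf t)-W(t_1,0)-W(0,t_2)$ (recall that the sheet vanishes on the axes). For the two one-parameter integrals ordinary Stieltjes integration by parts applies; for $\int_{\R_+^2}\underline{f_3}''\,dW_3$ I would invoke the two-parameter It\^o product formula for a bounded-variation field against the Brownian sheet (the product lemma of the Appendix), whose hypotheses are met because $\underline{f_3}=Pr_{V_2}f_3\in V_2$ gives $\underline{f_3}''$ locally finite planar variation with the decay supplied by the Appendix asymptotics of $h''$. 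The key algebraic point is that, after substituting $W_3=W-W(\cdot,0)-W(0,\cdot)$ in the resulting area integral $\int W_3\,d\underline{f_3}''$ and integrating the two correction pieces once more along the axes, the axis restrictions of $\underline{f_3}''$ merge with the one-parameter terms into precisely $\underline{f_{13}}=\underline{f_1}'-\underline{f_3}''(\cdot,0)$ and $\underline{f_{23}}=\underline{f_2}'-\underline{f_3}''(0,\cdot)$. The outcome is a representation of $L(\underline f)$ as a sum of boundary terms at infinity (a corner term and two far-edge line integrals along $\{s_1=T_1\}$ and $\{s_2=T_2\}$), the line integrals $\int W(t,0)\,(-d\underline{f_{13}}(t))$ and $\int W(0,t)\,(-d\underline{f_{23}}(t))$, and the area integral $\int W(\mathbf t)\,d\underline{f_3}''(\mathbf t)$.

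With this representation the bound follows from sign control. On $A$ one has $W(\mathbf t)\le u(\mathbf t)-f(\mathbf t)+\underline f(\mathbf t)$, and I would insert this estimate into the line and area integrals: condition (i) (that $\underline{f_{13}},\underline{f_{23}}$ are non-negative and non-increasing) together with $\underline{f_3}\in V_2$ (which fixes the signs of the rectangular and one-parameter increments of $\underline{f_3}''$) makes the integrating measures $-d\underline{f_{13}}$, $-d\underline{f_{23}}$ and $d\underline{f_3}''$ non-negative, so the substitution preserves the inequality direction. This splits each integral into a $u$-part, which reconstitutes $c$, and an $(\underline f-f)$-part. The latter, after one further Stieltjes integration by parts, reassembles into $\langle\underline f,\underline f-f\rangle$ plus boundary terms; since $f-\underline f=Pr_{\widetilde{V_{2,+}}}f$ is polar to $\underline f=Pr_{V_{2,+}}f$, the orthogonality $\langle Pr_{V_{2,+}}f,Pr_{\widetilde{V_{2,+}}}f\rangle=0$ forces $\langle\underline f,\underline f-f\rangle=0$, while every surviving boundary term at infinity is killed by condition (ii): the three limits in \eqref{conA1} dispose of the corner and the two axis contributions, and the two limits in \eqref{conA2} dispose of the far-edge line integrals (on $A$ the boundary values of $W$ are dominated by those of $u-f+\underline f$). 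What remains is $L(\underline f)\le c$ on $A$, which completes the argument.

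The main obstacle is the rigorous two-parameter integration by parts for the sheet term and the attendant bookkeeping: one must apply the product It\^o formula to $W_3\,\underline{f_3}''$, track the corner term, the two far-edge line integrals and the area integral, let $\mathbf T\to\infty$, and check term by term that the boundary contributions vanish under \eqref{conA1}--\eqref{conA2}. Equally delicate is justifying that $\underline{f_3}''$ has the planar variation and decay needed both to apply the product formula and to carry out the second, axis-directed integration by parts that manufactures $\underline{f_{13}}$ and $\underline{f_{23}}$ — this is exactly where $\underline{f_3}\in V_2$ and the Appendix asymptotics of $h''$ are indispensable. Once the signs are pinned down by condition (i) and the boundary terms cleared by condition (ii), the remaining probabilistic content is entirely carried by the Cameron--Martin identity.
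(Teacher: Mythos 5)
Your proposal is correct and rests on the same three pillars as the paper's own proof: a Cameron--Martin/Girsanov change of measure, the orthogonal decomposition $f=\underline{f}+Pr_{\widetilde{V_{2,+}}}f$ with $\langle \underline{f}, f-\underline{f}\rangle=0$, and the two-parameter integration-by-parts formula of Lemma \ref{lemma5.1} combined with the sign control of condition (i) and the boundary limits of condition (ii). The one genuine difference is how the shift is organized. The paper removes the \emph{whole} trend $f$, so that under the new measure the non-crossing event is exactly $\{W^0\le u\}$; the pathwise substitution then bounds the $\underline{f}$-integrals by $u$ directly, and the leftover exponential built from the polar part $Pr_{\widetilde{V_{2,+}}}f$ is recognized, via a second (reverse) Girsanov step, as the Radon--Nikodym density that reconstitutes $P_{f-\underline{f}}$. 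You instead shift only by $\underline{f}$, which makes the factor $P_{f-\underline{f}}=\pk{A}$ appear for free, but leaves the event boundary at $u-(f-\underline{f})\ge u$ (recall $f-\underline{f}\le 0$ since it lies in the polar cone), so your substitution generates extra non-negative $(\underline{f}-f)$-terms that the paper's route never sees. Your plan for disposing of them --- a second Stieltjes integration by parts reassembling them into $\langle \underline{f},\underline{f}-f\rangle=0$ --- is sound: applying the same finite-horizon integration-by-parts identity deterministically, with $f-\underline{f}$ in place of the random path, shows the boundary pieces telescope and the remainder converges to the inner product, which vanishes by polarity. One caveat: condition (ii) does \emph{not} do this job as you assert, since \eqref{conA1}--\eqref{conA2} control $u$ at infinity and say nothing about $f-\underline{f}$; the surviving $(\underline{f}-f)$-boundary terms must cancel algebraically (as they do), not by (ii). Net effect: your route trades the paper's reverse Girsanov step for an extra orthogonality/cancellation computation; both are valid, with the paper's ordering slightly more economical.
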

\begin{rem} Note that $f$ starts from zero therefore $f$ can not be a constant unless $f\equiv 0$ but this case is trivial.
\end{rem}
\begin{rem} Condition $(ii)$ of the theorem means that asymptotically the shifts and their derivatives are negligible  in comparison with function $u$. It is the generalization of the corresponding conditions for the Brownian bridge and Brownian pillow that are defined on a compact sets so that the corresponding condition holds automatically.
\end{rem}
\begin{proof}
Denote by $\widetilde{P}$ a probability measure that is defined via its Radon-Nikodym derivative
\begin{equation*}\begin{gathered} \frac{dP}{d\widetilde{P}}=\prod_{i=1,2}\exp\Big(-\frac12\|f_i\nO ^2+\int_{\R_+}f_i'(t)dW_i^0(t)\Big)
\exp\Big(-\frac12\|f_3\nO ^2+\int_{\R_+^2}f_3''(\mathbf{t})dW_3^0(\mathbf{t})\Big).
\end{gathered}
\end{equation*}
According to Cameron-Martin-Girsanov theorem,  $W_i^0(t)=W_i(t) +\int_{[0,t]}f_i'(s)ds,\;i=1,2$ are \pE{independent} Wiener processes and $W_3^0(\mathbf{t})=W_3(\mathbf{t}) +\int_{[\mathbf{0},\mathbf{t}]}f_3''(\mathbf{s})d\mathbf{s}$ is a Brownian sheet w.r.t. the measure $\widetilde{P}$ \pE{being further independent of $W_1^0, W_2^0$.} Denote
$1_u\{X\}=1\{X(\mathbf{t})\leq u(\mathbf{t}),\;\mathbf{t}\in\R_+^2\}$ and $$W^0(\mathbf{t})=W_1^0(t_1)+ W_2^0(t_2)+W^0_3(\mathbf{t}).$$
Note that $ \norm{f}^2= \norm{f_1}^2+\norm{f_2}^2+ \norm{f_3}^2$, \pE{hence
using further} \eqref{projV3} and \eqref{projV3B} we obtain
\BQNY
\lefteqn{P_f}\\
 &=&\EE{ 1_u\Big(\sum_{i=1,2}(W_i(t)+f_i(t))+W_3(\mathbf{t})+f_3(\mathbf{t})\Big)}\\
&=&\mathbb{E}_{\widetilde{P}}\Biggl( \frac{dP}{d\widetilde{P}}1_u\Big(W^0(\mathbf{t})\Big)\Biggr)\\
&=&
\exp\Big(-\frac12 \norm{f}^2 \Big) \EE{ \exp\Big(\int_{\R_+}f_1'(t)dW_1^0(t)
+\int_{\R_+}f_2'(t)dW_2^0(t)+\int_{\R_+^2}f_3''(\mathbf{t})dW_3^0(\mathbf{t})\Big)1_u\Big(W^0(\mathbf{t})\Big)}\\
&=&\exp\Big(-\frac12 \norm{\underline{f}}^2 \Big)\\
&&\times \mathbb{E} \Biggl\{\prod_{i=1,2}\exp\Big(-\frac12\|Pr_{\widetilde{V}_1}{f_i} \nO ^2
+\int_{\R_+} Pr_{\widetilde{V}_1}f_i'(t)dW_i^0(t)\Big)\exp\Big(-\frac12\|Pr_{\widetilde{V_2}}{f_3} \nOO ^2
+\int_{\R_+^2}Pr_{\widetilde{V_2}}{f_3}''(\mathbf{t})dW_2^0(\mathbf{t})
\Big)\\
&&\times \exp\Big(\sum_{i=1,2}\int_{\R_+ } \underline{f_i}'(t)dW_i^0( {t})+\int_{\R_+^2} \underline{f_3}''(\mathbf{t}) dW_2^0(\mathbf{t})\Big)1_u\Big(W^0(\mathbf{t})\Big)\Biggr\}.
\EQNY
Now we only need to re-write $$\sum_{i=1,2}\int_{\R_+ } \underline{f_i}'(t)dW_i^0( {t})+\int_{\R_+^2} \underline{f_3}''(t)dW_3^0(\mathbf{t})=\sum_{i=1,2}\int_{\R_+ } \underline{f_i}'(t)dW_i^0( {t})+\int_{\R_+^2} \underline{f_3}''(t)dW^0(\mathbf{t}).$$
In order to re-write $\int_{\R_+ } \underline{f_1}'(t)dW_1^0( {t})$, we mention that in this integral $dW_1^0( {t})=d_1W_1^0( {t})=d_1(W^0(t,0))$, therefore on the indicator $1_u\{\sum_{i=1,2} W_i^0(t)+W_3^0(\mathbf{t})\}=1_u\{W^0(\mathbf{t})\}$
under conditions of the theorem we have the relations
\begin{equation}\begin{gathered}\label{eq3.2n}
\int_{\R_+}\underline{f_1}' (t)dW_1^0( {t})=\lim_{n\rightarrow \infty}\int_{[0,n]}\underline{f_1}' (t)dW_1^0( {t})\\
=\lim_{n\rightarrow \infty}\Big(\underline{f_1}' (n)W^0(n,0)+\int_{[0,n]}W^0(t,0)d(-\underline{f_1}' )(t)\Big).
\end{gathered}
\end{equation}
Similarly, \begin{equation}\label{eq3.3n}\int_{\R_+ }\underline{f_2}'(t)dW_2^0( {t})
=\lim_{n\rightarrow \infty}\Big(\underline{f_2}' (n)W^0(0,n)+\int_{[0,n]}W^0(0,t)d(-\underline{f_2}' )(t)\Big).\end{equation}
Further,  by Lemma \ref{lemma5.1}
\begin{equation}\begin{gathered}\label{3.5n}
\int_{\R_+^2 }\underline{f_3}''(\mathbf{t})dW^0( {\mathbf{t}})=\lim_{n\rightarrow \infty}
\Big(\underline{f_3}''(\mathbf{n})W^0( {\mathbf{n}})-\underline{f_3}''(n,0)W^0(n,0)-\underline{f_3}''(0,n)W^0( 0,n)\\+\int_{[\mathbf{0},\mathbf{n}]}W^0(\mathbf{t})d  \underline{f_3}''(\mathbf{t})+\int_{[0,n]}W^0(s,n)d_s(-\underline{f_3}''(s,n))
+\int_{[0,n]}W^0(n,t)d_t(-\underline{f_3}''(n,t))\\+\int_{[0,n]}W^0(s,0)d_s(\underline{f_3}''(s,0))
+\int_{[0,n]}W^0(0,t)d_t(\underline{f_3}''(0,t))\Big).
\end{gathered}\end{equation}

Combining \eqref{eq3.2n}--\eqref{3.5n}  and using conditions \pE{(i)-(ii)}, we get that  on the same indicator

\begin{equation}\begin{gathered}\label{3.6n}\sum_{i=1,2}\int_{\R_+ } \underline{f_i}'(t)dW_i^0( {t})+\int_{\R_+^2} \underline{f_3}''(t)dW^0(\mathbf{t})\leq \lim_{n\rightarrow \infty}\Big(\underline{f_3}''(\mathbf{n})u( {\mathbf{n}})+(\underline{f_1}' (n)-\underline{f_3}''(n,0))u(n,0)\\+(\underline{f_2}' (n)-\underline{f_3}''(0,n))u(0,n)+\int_{[\mathbf{0},\mathbf{n}]}u(\mathbf{t})d  \underline{f_3}''(\mathbf{t})+\int_{[0,n]}u(s,n)d_s(-\underline{f_3}''(s,n))
+\int_{[0,n]}u(n,t)d_t(-\underline{f_3}''(n,t))\\+\int_{[0,n]}u(s,0)d_s(\underline{f_3}''(s,0)-\underline{f_1}'(s))
+\int_{[0,n]}u(0,t)d_t(\underline{f_3}''(0,t)-\underline{f_2}'(t)\Big)\\\leq \int_{\R^2_+}u(\mathbf{t})d  \underline{f_3}''(\mathbf{t})+\int_{\R_+}u(s,0)d_s(\underline{f_3}''(s,0)-\underline{f_1}'(s))
+\int_{\R_+}u(0,t)d_t(\underline{f_3}''(0,t)-\underline{f_2}'(t)).
\end{gathered}\end{equation}Further conclusions are similar to \cite{BiHa1}.
\end{proof}

The above theorem applied for $u(s,t)=u>0, s,t\ge 0$ combined with \eqref{LD} implies the following result.
\begin{corollary} If $f\in \kHC$ is such that $f(\mathbf{t}_0)>0$ for some $\mathbf{t}_0$ with non-negative components, then \eqref{LD2} holds.
\end{corollary}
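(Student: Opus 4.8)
The plan is to sandwich $\ln P_{\gamma f}$ between the upper bound of Theorem~\ref{Thn1} specialized to the constant boundary $u(\mathbf t)\equiv u>0$ and the lower bound \eqref{LD}, and then to slot $\ln P_{\gamma\underline f}$ into the resulting chain by monotonicity. The one structural fact that drives everything is that $V_{2,+}$ is a closed convex \emph{cone}, so its projection is positively homogeneous and $\underline{\gamma f}=\gamma\underline f$ for all $\gamma>0$; in particular the boundary data $\underline{f_1}',\underline{f_2}',\underline{f_3}''$ (and hence $\underline{f_{13}},\underline{f_{23}}$) all scale linearly in $\gamma$ when $f$ is replaced by $\gamma f$. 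I also record that $\underline f=Pr_{V_{2,+}}f\ge f$ (it solves \eqref{OP}) while $f-\underline f=Pr_{\widetilde{V_{2,+}}}f\le 0$ (the polar-cone sign from the remarks after Lemma~\ref{lemma 3.2}); and that $f(\mathbf t_0)>0$ forces $\underline f\not\equiv 0$, hence $\norm{\underline f}>0$.

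For the upper bound I would apply Theorem~\ref{Thn1} with $f$ replaced by $\gamma f$. Writing $B(u)$ for the ($\gamma$-independent) value of the three boundary integrals in the theorem evaluated for $f$, homogeneity turns the exponent into $\gamma B(u)-\tfrac{\gamma^2}{2}\norm{\underline f}^2$, so
\[
P_{\gamma f}\le P_{\gamma(f-\underline f)}\,\exp\Bigl(\gamma B(u)-\tfrac{\gamma^2}{2}\norm{\underline f}^2\Bigr).
\]
Taking logarithms, the prefactor is harmless: since $f-\underline f\le 0$ the non-crossing event only enlarges, whence $P_{\gamma(f-\underline f)}\le 1$ and $\ln P_{\gamma(f-\underline f)}\le 0$; the boundary term is merely $\gamma B(u)=O(\gamma)$. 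Therefore
\[
\ln P_{\gamma f}\le -\tfrac{\gamma^2}{2}\norm{\underline f}^2+O(\gamma)=-(1+o(1))\tfrac{\gamma^2}{2}\norm{\underline f}^2,\qquad \gamma\to\infty,
\]
which, combined with the matching lower bound \eqref{LD}, yields $\ln P_{\gamma f}\sim -\tfrac{\gamma^2}{2}\norm{\underline f}^2$.

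Finally I would insert $\ln P_{\gamma\underline f}$ into the chain. Since $\underline f\ge f$, a larger trend shrinks the non-crossing probability, so $P_{\gamma\underline f}\le P_{\gamma f}$ and hence $\ln P_{\gamma\underline f}\le \ln P_{\gamma f}\le -(1+o(1))\tfrac{\gamma^2}{2}\norm{\underline f}^2$. For the reverse inequality I would apply \eqref{LD} directly to $\underline f$; because $\underline f\in V_{2,+}$ is its own projection ($\underline{\underline f}=\underline f$), this gives $\ln P_{\gamma\underline f}\ge -(1+o(1))\tfrac{\gamma^2}{2}\norm{\underline f}^2$. The two estimates combine to $\ln P_{\gamma\underline f}\sim -\tfrac{\gamma^2}{2}\norm{\underline f}^2$, and together with the previous paragraph this is exactly \eqref{LD2}.

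The one delicate point is the upper-bound step, where one must justify that Theorem~\ref{Thn1} is applicable for constant $u$ and that $B(u)$ is a finite constant independent of $\gamma$. For $u\equiv u$, hypothesis (ii) collapses to $\underline{f_{13}}(t),\underline{f_{23}}(t),\underline{f_3}''(\mathbf t)\to 0$ together with the two mixed limits in \eqref{conA2}; granting these, each of the three integrals defining $B(u)$ is $u$ times a bounded total-variation increment of a monotone function and is therefore finite, so it is absorbed into the $O(\gamma)$ term. Everything else is the soft homogeneity and monotonicity bookkeeping above, so this verification is the only real obstacle.
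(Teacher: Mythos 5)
Your proposal is correct and follows essentially the same route as the paper, whose entire proof of the corollary is the one-line observation that Theorem \ref{Thn1} applied with constant boundary $u(s,t)=u>0$, combined with the lower bound \eqref{LD}, yields \eqref{LD2}; the cone homogeneity $\underline{\gamma f}=\gamma\underline{f}$, the bound $P_{\gamma(f-\underline{f})}\le 1$, and the monotonicity insertion of $P_{\gamma\underline{f}}$ (using $\underline{f}\ge f$ and $\underline{\underline{f}}=\underline{f}$) are exactly the bookkeeping the paper leaves implicit. The one ``delicate point'' you flag---checking conditions (i)--(ii) of Theorem \ref{Thn1} for constant $u$ so that the boundary term $B(u)$ is a finite, $\gamma$-independent constant---is handled in the paper by Remark a) following the corollary, which invokes Lemma \ref{lem5} to guarantee the required decay for bounded $u$.
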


{\bf Remarks}:
a) If $u$ is bounded, then according to Lemma \ref{lem5}, conditions \pE{$(i)-(ii)$} are  satisfied. \\
b) Our results can be generalized to higher dimensions. We only mention that in the   case of $n$-parameter functions  we have to define similarly all the differences $\Delta^k_{\mathbf{s}} f(\mathbf{t}),\;1\leq k\leq n$ and the space $$V_n=\{h \in \mathcal{H}_n^2\big|(-1)^k\Delta^k_\mathbf{s}h(\mathbf{t})\geq 0,\text{for any}\; \mathbf{s}\leq\mathbf{t},\;1\leq k\leq n\}.$$

c) \iE{The case of linear combinations of $W_i$'s can be treated with some obvious modifications. }

d) \iE{Consider  the additive Brownian pillow
$$
B(t_1,t_2)=B_1(t_1)+B_2(t_2)+ B_3(t_1,t_2), \quad t_1,t_2 \in [0,1],
$$
which is constructed similarly to the additive Wiener field; here $B_1,B_2$ are two independent Brownian bridges and $B_3$ is a Brownian pillow being further independent of $B_1, B_2$. The RKHS of $B,B_1,B_3$ are almost the same \pE{as} $W,W_1,W_3$ with the only differences that the corresponding functions are defined on $[0,1]^2$ or $ [0,1]$ and the functions are zero on the boundaries of these intervals. The closed convex spaces $V_1,V_2$ and $V_3$ are then defined similarly as in Section 2, and
  thus all the results above hold for the additive Brownian pillow by simply changing the conditions for $f$ and $u$ accordingly.
  Note that compared to \cite{Pillow} we do not need to put restrictions on $\underline{f}$.  Thus the results obtained by our approach here are more general}.

\section {Appendix}
Let $A\in \kHB $ be a two-parameter non-random function. If $A \in \widetilde{V_2}$, then  $A$ is non-increasing as  function of any one-parameter variable and non-decreasing as a function of two variables. Then for \pE{the} additive Wiener field $W=\{W(\mathbf{t})=W_1(t_1)+W_2(t_2)+W(\mathbf{t}), \mathbf{t}\in \R_+^2\}$ and for any $\mathbf{T}=(T,T)$ there exist two integrals of the first kind (according to the classification from the papers \cite{Cairoli,Wong} and \cite{Wong-1}), $\int_{[\mathbf{0},\mathbf{T}]}A(\mathbf{u})dW(\mathbf{u})$ that is standard integral of non-random function with respect to a Gaussian process, or It\^{o} integral, which is the same in this case because $$\int_{[\mathbf{0},\mathbf{T}]}A(\mathbf{u})dW(\mathbf{u})=\int_{[\mathbf{0},\mathbf{T}]}A(\mathbf{u})dW_3(\mathbf{u}),$$ and $\int_{[\mathbf{0},\mathbf{T}]}W(\mathbf{u})dA(\mathbf{u})$ that is the Riemann-Stieltjes integral. We argue only for the first integral. Indeed, such function $A$ achieves its maximal value at $\mathbf{0}.$ Therefore $\int_{[\mathbf{0},\mathbf{T}]}A^2(\mathbf{s})d\mathbf{s}\leq A(\mathbf{0})T^2$ which implies
that the  integral $\int_{[\mathbf{0},\mathbf{T}]}A(\mathbf{u})dW_3(\mathbf{u})$
 is correctly defined as It\^{o} integral. Moreover, denote the increments
 $$\Delta^1_{ik,n}X=\Delta^1_{\big(\frac{T(i-1)}{n}, \frac{T(k-1)}{n}\big)}X\Big(\frac{Ti }{n}, \frac{T(k-1)}{n}\Big)$$
  and
  $$\Delta^2_{ik,n}X=\Delta^1_{\big(\frac{T(i-1)}{n}, \frac{T(k-1)}{n}\big)}X\Big(\frac{T(i-1)}{n}, \frac{Tk}{n}\Big),$$
  $X=A,W$.    Then there exist two integrals of the second kind
   $$\int_{[\mathbf{0},\mathbf{T}]}d_iA(\mathbf{u})d_jW(\mathbf{u}),\quad i=1,2, j=3-i,$$
that are defined as the limits in probability of integral sums where for example, $$\int_{[\mathbf{0},\mathbf{T}]}d_1A(\mathbf{u})d_2W(\mathbf{u})=\lim_{n\rightarrow\infty}\sum_{1\leq i,k\leq n}\Delta^1_{ik,n}A\Delta^2_{ik,n}W.$$
\begin{lem}\label{lemma5.1}
Let $A\in \widetilde{V}_2$ be a two-parameter non-random function and let $W=\{W(\mathbf{t}), \mathbf{t}\in \R_+^2\}$ be an additive Wiener field. Then for any $\mathbf{T}=(T,T)$ we have the following version of integration-by-parts formula:
\BQNY
\int_{[\mathbf{0},\mathbf{T}]}A(\mathbf{s})dW(\mathbf{s})
&=&
A(\mathbf{T})W(\mathbf{T})-A(T,0)W(T,0)-A(0,T)W(0,T)+\int_{[\mathbf{0},\mathbf{T}]}W(\mathbf{s})dA(\mathbf{s})\\
&&+\int_{[0,T]}W(s,T)d_s(-A(s,T))
+\int_{[0,T]}W(T,t)d_t(-A(T,t))\\&&+\int_{[0,T]}W_1(s)d_s(A(s,0))+\int_{[0,T]}W_2(t)d_s(A(0,t)).
\EQNY
\end{lem}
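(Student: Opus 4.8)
The plan is to reduce the claim to the two-parameter Green (integration-by-parts) identity for the Brownian sheet and then re-express the result through the additive field $W$. First I would record the two structural facts that make everything work. On one hand, the rectangular (mixed) increment of $W_1(t_1)+W_2(t_2)$ over any cell vanishes, so the planar It\^o integral against $W$ coincides with the one against $W_3$,
\[
\int_{[\mathbf{0},\mathbf{T}]}A(\mathbf{s})\,dW(\mathbf{s})=\int_{[\mathbf{0},\mathbf{T}]}A(\mathbf{s})\,dW_3(\mathbf{s}),
\]
as already noted before the statement. On the other hand, membership $A\in\widetilde{V}_2$ makes $A$ non-increasing in each single variable and super-additive in the mixed sense; hence $A$ has bounded variation on $[\mathbf{0},\mathbf{T}]$ and generates genuine finite signed measures (the mixed planar one $dA$, and the edge measures $d_s A(s,T)$, $d_t A(T,t)$, $d_s A(s,0)$, $d_t A(0,t)$). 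Combined with the bound $\int_{[\mathbf{0},\mathbf{T}]}A^2(\mathbf{s})\,d\mathbf{s}\le A(\mathbf{0})T^2<\infty$ recorded above, this guarantees that the It\^o integral on the left and every Riemann--Stieltjes integral on the right are well defined.

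Next I would establish the sheet version of the formula by two-parameter summation by parts. Writing the It\^o integral as the limit in probability of forward sums $\sum_{i,k}A(t_{i-1},t_{k-1})\,\Delta_{ik}W_3$ over the grid $t_i=Ti/n$, I would apply Abel summation successively in each coordinate. This purely algebraic step splits the sum into a corner contribution, two edge sums carrying the first differences of $A$ along the top and right edges, two edge sums along the bottom and left axes, and an interior sum carrying the mixed second difference of $A$. Passing to the limit, the interior sum tends to $\int_{[\mathbf{0},\mathbf{T}]}W_3\,dA$ and the edge sums to the corresponding one-dimensional Stieltjes integrals, the convergence being controlled by path-continuity of $W_3$ and the bounded variation of $A$. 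Since $W_3$ vanishes on both axes, the bottom/left edge sums and the corner corrections at $(T,0)$, $(0,T)$, $\mathbf{0}$ all drop out, leaving
\[
\int_{[\mathbf{0},\mathbf{T}]}A\,dW_3=A(\mathbf{T})W_3(\mathbf{T})+\int_{[\mathbf{0},\mathbf{T}]}W_3\,dA+\int_{[0,T]}W_3(s,T)\,d_s(-A(s,T))+\int_{[0,T]}W_3(T,t)\,d_t(-A(T,t)).
\]
Equivalently, this is Mishura's generalized It\^o product formula from \cite{mishura} applied to $A(\mathbf{s})W_3(\mathbf{s})$, whose hypotheses are met once the bounded-variation structure above is in hand.

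Finally I would convert to $W$ by substituting $W_3(\mathbf{s})=W(\mathbf{s})-W_1(s_1)-W_2(s_2)$ in each of the four terms, using the boundary identities $W_1(T)=W(T,0)$ and $W_2(T)=W(0,T)$. The corner term yields $A(\mathbf{T})W(\mathbf{T})$ together with two spurious pieces $-A(\mathbf{T})W(T,0)-A(\mathbf{T})W(0,T)$. In the area integral I would apply the collapse identity
\[
\int_{[\mathbf{0},\mathbf{T}]}g(s_1)\,dA=\int_{[0,T]}g(s_1)\,d_{s_1}\bigl(A(s_1,T)-A(s_1,0)\bigr)
\]
(and its $s_2$-analogue) to turn the $W_1$- and $W_2$-parts of the area integral into edge integrals; the substitution also produces $W_1$- and $W_2$-pieces inside the top/right edge integrals, whose $A(s,T)$- and $A(T,t)$-parts cancel exactly against the collapsed area pieces, leaving precisely $\int_{[0,T]}W_1(s)\,d_s A(s,0)$ and $\int_{[0,T]}W_2(t)\,d_t A(0,t)$. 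The residual constant edge contributions $W(0,T)\bigl(A(\mathbf{T})-A(0,T)\bigr)$ and $W(T,0)\bigl(A(\mathbf{T})-A(T,0)\bigr)$ combine with the two spurious corner pieces to produce exactly $-A(T,0)W(T,0)-A(0,T)W(0,T)$. Collecting terms reproduces the asserted identity.

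The hard part will not be the substitution bookkeeping of the last step, which is routine, but the passage to the limit in the second step: one must show that after the two-dimensional Abel summation each discrete edge and area sum converges to the claimed Stieltjes integral, and that the It\^o forward sums converge in probability. This is exactly where $A\in\widetilde{V}_2$ is needed, since its monotonicity supplies the bounded-variation control; the cleanest route is to invoke the two-parameter formula of \cite{mishura} and simply verify its hypotheses for such $A$.
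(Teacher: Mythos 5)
Your proposal is correct, and I checked the final bookkeeping in detail: the spurious corner pieces $-A(\mathbf{T})W(T,0)-A(\mathbf{T})W(0,T)$ do combine with the edge residuals $W(T,0)\bigl(A(\mathbf{T})-A(T,0)\bigr)$ and $W(0,T)\bigl(A(\mathbf{T})-A(0,T)\bigr)$ to give $-A(T,0)W(T,0)-A(0,T)W(0,T)$, and the $W_1$-- and $W_2$--pieces generated in the top/right edge integrals cancel exactly against the collapsed area terms, leaving $\int_{[0,T]}W_1(s)\,d_sA(s,0)$ and $\int_{[0,T]}W_2(t)\,d_tA(0,t)$ as claimed. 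However, your route is genuinely different from the paper's. The paper never reduces to the sheet inside the proof: it works with the additive field $W$ throughout, applying the classical one-parameter It\^{o} integration by parts along the top edge to $\int_{[0,T]}A(s,T)\,d_sW(s,T)$, and then Mishura's generalized two-parameter It\^{o} formula \emph{twice} --- once to $\int_{[0,T]}A(s,T)\,d_sW(s,T)$ and once to $\int_{[0,T]}W(T,t)\,d_tA(T,t)$ --- so that the two copies of the second-kind mixed integral $\int_{[\mathbf{0},\mathbf{T}]}d_1W(\mathbf{t})\,d_2A(\mathbf{t})$ cancel; the axis terms involving $W_1$ and $W_2$ then fall out organically (the bottom-edge term via one more one-dimensional integration by parts, the left-edge term because $W(0,t)=W_2(t)$). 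Your approach instead isolates the clean Green formula for $W_3$ --- simplified by the vanishing of the sheet on the axes, so only the corner, two edges and the area survive --- provable self-containedly by two-dimensional Abel summation without ever introducing second-kind integrals, and pushes all the structure of $W$ into the substitution $W_3=W-W_1-W_2$. What the paper's route buys is shorter algebra at the cost of invoking the two-parameter stochastic calculus machinery of \cite{mishura} twice; what yours buys is a more elementary and modular core identity (the sheet formula is of independent use) at the cost of the heavier, though routine, cancellation bookkeeping. Your justification of well-posedness is also sound: the monotonicity structure the paper attributes to $A$ (non-increasing in each variable, non-negative mixed increments) bounds $A$ between $A(\mathbf{T})$ and $A(\mathbf{0})$ on the rectangle and yields finite planar and edge variations, so all Stieltjes measures and the It\^{o}/Wiener integral exist, and continuity of the paths of $W_3$ gives convergence of the discrete edge and area sums; your closing suggestion to verify the hypotheses of \cite{mishura} for such $A$ is precisely the point where your argument and the paper's converge.
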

\begin{proof} The standard one-parameter It\^{o} formula yields
 $$\int_{[0,T]}A(s,T)d_sW(s,T)=A(\mathbf{T})W(\mathbf{T})-A(0,T)W(0,T)-\int_{[0,T]}W(s,T)d_sA(s,T).$$
Using further the generalized two-parameter It\^{o} formula (see e.g., \cite{mishura}), we get
 $$\int_{[0,T]}A(s,T)d_sW(s,T)=\int_{[0,T]}A(s,0)dW_1(s)+\int_{[\mathbf{0},\mathbf{T}]}A(\mathbf{s})dW(\mathbf{s})
 +\int_{[\mathbf{0},\mathbf{T}]}d_1W(\mathbf{t})d_2A(\mathbf{t}),$$
 and similarly $$\int_{[0,T]}W(T,t)d_tA(T,t)=\int_{[0,T]}W(0,t)d_tA(0,t)+\int_{[\mathbf{0},\mathbf{T}]}W(\mathbf{s})dA(\mathbf{s})
 +\int_{[\mathbf{0},\mathbf{T}]}d_1W(\mathbf{t})d_2A(\mathbf{t}).$$
 From three   equalities above we  immediately get that
\BQNY
\int_{[\mathbf{0},\mathbf{T}]}A(\mathbf{s})dW(\mathbf{s})
 &=&\int_{[0,T]}A(s,T)d_sW(s,T)-\int_{[\mathbf{0},\mathbf{T}]}d_1W(\mathbf{t})d_2A(\mathbf{t})-\int_{[0,T]}A(s,0)dW_1(s)\\
 &=&\int_{[0,T]}A(s,T)d_sW(s,T)-\int_{[0,T]}W(T,t)d_tA(T,t)\\
 &+&
 \int_{[\mathbf{0},\mathbf{T}]}W(\mathbf{s})dA(\mathbf{s})+\int_{[0,T]}W(0,t)d_tA(0,t)-\int_{[0,T]}A(s,0)dW_1(s)\\
 &=&
A(\mathbf{T})W(\mathbf{T})-A(T,0)W(T,0)-A(0,T)W(0,T)+\int_{[\mathbf{0},\mathbf{T}]}W(\mathbf{s})dA(\mathbf{s})\\
&&+\int_{[0,T]}W(s,T)d_s(-A(s,T))
+\int_{[0,T]}W(T,t)d_t(-A(T,t))\\&&+\int_{[0,T]}W_1(s)d_s(A(s,0))+\int_{[0,T]}W_2(t)d_s(A(0,t))
\EQNY
establishing the proof.
\end{proof}

\COM{
\begin{lem}\label{lem2.1} If the function $h:\R_+^2\rightarrow \R$ admits the representation
\begin{equation}\label{eqhm-2}
 h(\mathbf{t})=h_1(t_1)+h_2(t_2)+ h_3(\mathbf{t}),
\end{equation}
 where $h_i\in \kHA, i=1,2$ and $h_3\in \kHB.$ Then \pE{the} representation \eqref{eqhm-2} is unique.
\end{lem}
The proof follows immediately if we put $t_i=0, i=1,2.$

Note that the covariance function of the field $W$ from \eqref{eqhm-1} equals
\begin{equation}\label{eqhm-3}
\EE{W(\mathbf{s})W(\mathbf{t})}=s_1\wedge t_1+s_2\wedge t_2+(s_1\wedge t_1)(s_2\wedge t_2).
\end{equation}
Now, introduce the space
\begin{equation*}
\kHC=\{h:\R_+^2\rightarrow \R\big|h(\mathbf{t})=\sum_{i=1,2}h_i(t_i)+h_3(\mathbf{t}),\; \text{where}\;  h_i\in \kHA,  i=1,2 \; \text{and}\; h_3\in \kHB \}.
\end{equation*}
The next result follows immediately from the definition of RKHS, Theorem 5, P. 24 \cite{berlinet} and Lemma \ref{lem2.1}.
}
\begin{lem} \label{lem:kHC}
The RKHS related to covariance function of the process $W$ coincides with $\kHC$ given in \eqref{eqhm-9}.
\end{lem}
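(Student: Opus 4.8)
The plan is to realize the covariance kernel of $W$ as a sum of three reproducing kernels and then invoke the classical theorem on sums of reproducing kernels (Theorem~5, p.~24 in \cite{berlinet}). Since $W_1,W_2,W_3$ are mutually independent and centered, the covariance of the sum $W=W_1+W_2+W_3$ is the sum of the individual covariances; viewing each as a kernel on $\R_+^2\times\R_+^2$ this gives $K=K_1+K_2+K_3$, where $K_1(\mathbf{s},\mathbf{t})=s_1\wedge t_1$, $K_2(\mathbf{s},\mathbf{t})=s_2\wedge t_2$, and $K_3(\mathbf{s},\mathbf{t})=(s_1\wedge t_1)(s_2\wedge t_2)$, which is precisely the decomposition displayed in \eqref{eqhm-3}.

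Next I would identify the RKHS of each summand. The kernel $K_1$ depends on $\mathbf{s},\mathbf{t}$ only through their first coordinates, so the span of $\{K_1(\cdot,\mathbf{t})\}$ consists of functions of $s_1$ alone and, under the inner product $\langle K_1(\cdot,\mathbf{t}),K_1(\cdot,\mathbf{t}')\rangle=t_1\wedge t_1'$, its completion is isometric to $\kHA$ acting in the first variable; symmetrically for $K_2$ in the second variable. The kernel $K_3$ is exactly the Brownian sheet kernel, so its RKHS is $\kHB$. The sum theorem then asserts that the RKHS $\mathcal{H}_K$ of $K$ is the vector sum of these three spaces, which is exactly the set $\kHC$ of \eqref{eqhm-9}, equipped with the infimal-convolution norm $\norm{h}^2=\min\{\norm{h_1}^2+\norm{h_2}^2+\norm{h_3}^2\}$, the minimum being over all decompositions $h=h_1+h_2+h_3$ with $h_1,h_2\in\kHA$ (in the respective coordinate) and $h_3\in\kHB$.

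The key remaining step, and the main obstacle, is to show that this infimal convolution collapses to the plain direct sum \eqref{eqhm-10}, i.e. that the minimum is attained at a unique decomposition. For this I would check that the three component spaces are in direct sum, by verifying that their pairwise intersections are trivial: a function of $t_1$ alone that also vanishes on the axis $\{t_2=0\}$ (as every element of $\kHB$ does) must vanish identically, and similarly for the other pairs. Concretely, evaluating $h=h_1+h_2+h_3$ on the two axes and using $h_i(0)=0$ together with $h_3\equiv 0$ on the axes yields $h_1(t_1)=h(t_1,0)$ and $h_2(t_2)=h(0,t_2)$, whence $h_3=h-h_1-h_2$ is determined as well; this is the uniqueness of decomposition quoted in Section~1. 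Once uniqueness holds, the feasible set in the infimal convolution is a single point, so $\norm{h}^2=\norm{h_1}^2+\norm{h_2}^2+\norm{h_3}^2$, which matches \eqref{eqhm-10} verbatim and completes the identification $\mathcal{H}_K=\kHC$ as Hilbert spaces.

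Finally I would note that the only place where the explicit structure of the kernels is used, rather than general operations on reproducing kernels, is the identification of $\mathcal{H}_{K_1}$ and $\mathcal{H}_{K_2}$ with the coordinate copies of $\kHA$; all the rest reduces to the sum theorem and the elementary direct-sum argument above, so no genuinely analytic estimate is required and the proof is essentially structural.
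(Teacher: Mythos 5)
Your proof is correct and takes essentially the same route as the paper: the paper likewise proves uniqueness of the decomposition by evaluating on the axes (setting $t_i=0$, using that $h_1,h_2$ vanish at $0$ and $h_3$ vanishes on the axes) and then invokes Theorem~5, p.~24 of \cite{berlinet} for the sum of the three kernels in \eqref{eqhm-3}. Your write-up merely makes explicit the steps the paper leaves implicit, namely the identification of the coordinate copies of $\kHA$ and the collapse of the infimal-convolution norm to the direct-sum norm \eqref{eqhm-10} once the decomposition is unique.
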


\begin{proof} If the function $h:\R_+^2\rightarrow \R$ admits the representation
\begin{equation}\label{eqhm-2}
 h(\mathbf{t})=\sum_{i=1,2}h_i(t_i)+h_3(\mathbf{t}),
\end{equation}
 where $h_i\in \kHA, i=1,2$ and $h_3\in \kHB,$ then the representation \eqref{eqhm-2} is unique. This claim
 follows immediately if we put $t_i=0, i=1,2.$  In view of \eqref{eqhm-3}
the claim follows by Theorem 5, p.24 in \cite{berlinet}.
\end{proof}

Consider the subspace $V_1=\{h\in \kHA  \big|\,\Delta_s^1 h'(t)\leq 0\;, 0\leq s\leq t<+\infty\}.$
  Evidently, for any $h\in V_1$ we have that $h'(t)\downarrow 0$ as $t\rightarrow\infty.$
   Now we establish similar fact for the subspace
    $$V_2=\{h \in \kHB \big|\Delta_\mathbf{s}h''(\mathbf{t})\geq 0,\;
    \Delta^1_\mathbf{s}h''(t_1,s_2)\leq 0,\;\Delta^2_\mathbf{s}h''(s_1,t_2)\leq 0\;\text{for any}\;
     \mathbf{s}\leq\mathbf{t}\; \text{and}\;\mathbf{t}\in \R^2_+ \}.$$
\begin{lem} \label{lem5} Let function $h \in V_2$ and suppose that $\int_{\R_+}(h''(s,0))^2ds<\infty$ and $\int_{\R_+}(h''(0,t))^2dt<\infty$. Then  $h''(s,t)\downarrow 0$ as $s\rightarrow\infty$ for any $t\in R_+$,  $h''(\pE{s},t)\downarrow 0$ as $t\rightarrow\infty$ for any $s\in R_+$, and $h''(s,t)\downarrow 0$ as $s,t\rightarrow\infty.$
\end{lem}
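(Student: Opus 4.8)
The plan is to extract from the definition of $V_2$ the only structural fact that matters, namely coordinatewise monotonicity of $h''$, and then to combine it with the three square-integrability hypotheses exactly as in the one-parameter case. From $\Delta^1_{\mathbf{s}}h''(t_1,s_2)\le 0$ and $\Delta^2_{\mathbf{s}}h''(s_1,t_2)\le 0$ one reads off that $h''$ is non-increasing in each of its two arguments separately. Consequently, for every fixed $t$ the limit $g(t):=\lim_{s\to\infty}h''(s,t)$ exists in $[-\infty,\infty)$ as the limit of a non-increasing function, and symmetrically $\lim_{t\to\infty}h''(s,t)$ exists for every fixed $s$. The whole statement thus reduces to showing that all these limits equal $0$ and that $h''\ge 0$.

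First I would treat the two boundary lines. The section $s\mapsto h''(s,0)$ is non-increasing and, by hypothesis, lies in $L_2(\R_+,\lambda_1)$. Exactly as for $V_1$ (where one has $h'(t)\downarrow 0$), a non-increasing square-integrable function on $\R_+$ must tend to $0$: if its limit $\ell$ were non-zero, then $(h''(s,0))^2$ would be bounded below by a positive constant on a half-line, contradicting integrability. Hence $h''(s,0)\downarrow 0$, and monotonicity forces $h''(s,0)\ge 0$; the same applies verbatim to $t\mapsto h''(0,t)$.

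Next I would handle the interior. Since $h''\in L_2(\R_+^2,\lambda_2)$, Tonelli's theorem gives $\int_{\R_+}(h''(s,t))^2\,ds<\infty$ for almost every $t$, and for each such $t$ the one-parameter argument above forces $g(t)=0$. The function $g$ is non-increasing (take $s\to\infty$ in $h''(s,t)\ge h''(s,t')$ for $t\le t'$), so it vanishes on a dense set of full measure; squeezing $g(t_0)$ between values of $g$ at points of this set just above and just below $t_0$ yields $g(t_0)=0$ for every $t_0>0$, while the single missing value $g(0)$ is precisely the boundary limit already shown to be $0$. Thus $h''(s,t)\downarrow 0$ as $s\to\infty$ for every $t\ge 0$, and being a non-increasing limit of $0$ this also gives $h''\ge 0$; the symmetric argument yields $h''(s,t)\downarrow 0$ as $t\to\infty$ for every $s\ge 0$. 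For the joint limit I would fix any $s_0$, use $\lim_{t\to\infty}h''(s_0,t)=0$ together with $0\le h''(s,t)\le h''(s_0,t)$ for $s\ge s_0$ to obtain $h''(s,t)\to 0$ as $s,t\to\infty$, and then let coordinatewise monotonicity upgrade this to the monotone form $\downarrow 0$.

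The main obstacle, and the reason the two extra integrability hypotheses are present, is that the boundary lines $\{t=0\}$ and $\{s=0\}$ are $\lambda_2$-null, so the ambient integrability $h''\in L_2(\R_+^2,\lambda_2)$ gives no control of $h''$ there; the interior almost-everywhere conclusion cannot be propagated to $t=0$ by the monotonicity squeeze, since there is no $t<0$ from which to squeeze. This is exactly what the hypotheses $\int_{\R_+}(h''(s,0))^2\,ds<\infty$ and $\int_{\R_+}(h''(0,t))^2\,dt<\infty$ repair. I expect the third defining inequality $\Delta_{\mathbf{s}}h''(\mathbf{t})\ge 0$ (the $2$-increasing property) to play no role in this particular statement: it is needed elsewhere but not for the asymptotics of $h''$.
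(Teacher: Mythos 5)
Your proposal is correct and follows essentially the same route as the paper's own proof: Tonelli to get square-integrable sections $s\mapsto h''(s,t)$ for a.e.\ $t$, monotonicity in $s$ to force those sections to decrease to $0$, the extra boundary hypotheses to cover the $\lambda_2$-null lines $\{t=0\}$ and $\{s=0\}$, and monotonicity in $t$ to propagate the conclusion to every $t$. You merely spell out steps the paper compresses (the explicit squeeze through the full-measure set, the joint limit, and the observation that the $2$-increasing condition $\Delta_{\mathbf{s}}h''(\mathbf{t})\ge 0$ is not used), all of which are accurate.
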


\begin{proof} It is sufficient to establish the first formula. We have from $\int_{\R^2_+}(h''(s,t))^2dsdt<\infty$ that
$\int_{\R_+}(h''(s,t))^2ds<\infty$ for a.e. $t$. Furthermore, $h''(s,t)$ is non-increasing in  $s$ therefore for such $t$ we have
$h''(s,t)\downarrow 0$ as $s\rightarrow\infty$ and it follows from the assumption that $h''(s,0)\downarrow 0$ as $s\rightarrow\infty$. Since it is non-increasing in  $t$, we get such convergence for any $t$, hence the claim follows.
\end{proof}

{\bf Acknowledgment}:  Both authors were partially supported by the Swiss National Science Foundation project 200021-140633/1 and the project RARE -318984
 (a Marie Curie FP7 IRSES Fellowship).

\bibliographystyle{plain}

 \bibliography{GYYYZ}

\newcommand{\nosort}[1]{}\def\cprime{$'$} \def\cprime{$'$}
\begin{thebibliography}{10}

\bibitem{berlinet}
A.~Berlinet and C.~Thomas-Agnan.
\newblock {\em Reproducing kernel {H}ilbert spaces in probability and
  statistics}.
\newblock Kluwer Academic Publishers, Boston, MA, 2004.

\bibitem{BiHa1}
W.~Bischoff and E.~Hashorva.
\newblock A lower bound for boundary crossing probabilities of {B}rownian
  bridge/motion with trend.
\newblock {\em Statist. Probab. Lett.}, 74(3):265--271, 2005.

\bibitem{MR2028621}
W.~Bischoff, E.~Hashorva, J.~H{\"u}sler, and F.~Miller.
\newblock Exact asymptotics for boundary crossings of the {B}rownian bridge
  with trend with application to the {K}olmogorov test.
\newblock {\em Ann. Inst. Statist. Math.}, 55(4):849--864, 2003.

\bibitem{1137.60023}
{Bischoff, W. and Hashorva, E. and H\"usler, J.}
\newblock {An asymptotic result for non crossing probabilities of Brownian
  motion with trend.}
\newblock {\em Commun. Stat., Theory Methods}, 36(13-16):2821--2828, 2007.

\bibitem{1103.60040}
{Bischoff, W. and Hashorva, E. and H\"usler, J. and Miller, F.}
\newblock {On the power of the Kolmogorov test to detect the trend of a
  Brownian bridge with applications to a change-point problem in regression
  models.}
\newblock {\em Stat. Probab. Lett.}, 66(2):105--115, 2004.

\bibitem{1079.62047}
{Bischoff, W. and Hashorva, E. and H\"usler, J. and Miller, F.}
\newblock {Analysis of a change-point regression problem in quality control by
  partial sums processes and Kolmogorov type tests.}
\newblock {\em Metrika}, 62(1):85--98, 2005.

\bibitem{MR2576883}
K.~Borovkov and A.N. Downes.
\newblock On boundary crossing probabilities for diffusion processes.
\newblock {\em Stochastic Process. Appl.}, 120(2):105--129, 2010.

\bibitem{BNovikov2005}
K.~Borovkov and A.~Novikov.
\newblock Explicit bounds for approximation rates of boundary crossing
  probabilities for the {W}iener process.
\newblock {\em J. Appl. Probab.}, 42(1):82--92, 2005.

\bibitem{Cairoli}
R.~Cairoli and John~B. Walsh.
\newblock Stochastic integrals in the plane.
\newblock {\em Acta Math.}, 134:111--183, 1975.

\bibitem{Csaki}
E.~Cs{\'a}ki, D.~Khoshnevisan, and Z.~Shi.
\newblock Boundary crossings and the distribution function of the maximum of
  {B}rownian sheet.
\newblock {\em Stochastic Process. Appl.}, 90(1):1--18, 2000.

\bibitem{Durb92}
J.~Durbin.
\newblock The first-passage density of the {B}rownian motion process to a
  curved boundary.
\newblock {\em J. Appl. Probab.}, 29(2):291--304, 1992.

\bibitem{Pillow}
E.~Hashorva.
\newblock Boundary non-crossings of {B}rownian pillow.
\newblock {\em J. Theoret. Probab.}, 23(1):193--208, 2010.

\bibitem{HMS12}
E.~Hashorva, Y.S. Mishura, and O.~Seleznjev.
\newblock Boundary non-crossing probabilities for fractional brownian motion
  with trend.
\newblock {\em arXiv:1309.7624}, 2013.

\bibitem{janssen}
A.~Janssen and H.~{\"U}nl{\"u}.
\newblock Regions of alternatives with high and low power for goodness-of-fit
  tests.
\newblock {\em J. Statist. Plann. Inference}, 138(8):2526--2543, 2008.

\bibitem{LiKuelbs}
W.V. Li and J.~Kuelbs.
\newblock Some shift inequalities for {G}aussian measures.
\newblock In {\em High dimensional probability ({O}berwolfach, 1996)},
  volume~43 of {\em Progr. Probab.}, pages 233--243. Birkh\"auser, Basel, 1998.

\bibitem{mishura}
Y.S. Mishura.
\newblock A generalized {I}t\^o formula for two-parameter martingales. {II}.
\newblock {\em Teor. Veroyatnost. i Mat. Statist.}, (32):72--85, 134, 1985.

\bibitem{Nov99}
A.~Novikov, V.~Frishling, and N.~Kordzakhia.
\newblock Approximations of boundary crossing probabilities for a {B}rownian
  motion.
\newblock {\em J. Appl. Probab.}, 36(4):1019--1030, 1999.

\bibitem{MR2009980}
A.~Novikov, V.~Frishling, and N.~Kordzakhia.
\newblock Time-dependent barrier options and boundary crossing probabilities.
\newblock {\em Georgian Math. J.}, 10(2):325--334, 2003.

\bibitem{Pit96}
V.I. Piterbarg.
\newblock {\em Asymptotic {M}ethods in the {T}heory of {G}aussian {P}rocesses
  and {F}ields}, volume 148 of {\em Translations of Mathematical Monographs}.
\newblock American Mathematical Society, Providence, RI, 1996.

\bibitem{Wang2001}
K.~P{\"o}tzelberger and L.~Wang.
\newblock Boundary crossing probability for {B}rownian motion.
\newblock {\em J. Appl. Probab.}, 38(1):152--164, 2001.

\bibitem{Somayasa13}
W.~Somayasa.
\newblock The partial sums of the least squares residuals of spatial
  observations sampled according to a probability measure.
\newblock {\em J. Indones. Math. Soc.}, 19(1):23–--40, 2013.

\bibitem{Wang97}
L.~Wang and K.~P{\"o}tzelberger.
\newblock Boundary crossing probability for {B}rownian motion and general
  boundaries.
\newblock {\em J. Appl. Probab.}, 34(1):54--65, 1997.

\bibitem{Wong}
E.~Wong and M.~Zakai.
\newblock Martingales and stochastic integrals for processes with a
  multi-dimensional parameter.
\newblock {\em Z. Wahrscheinlichkeitstheorie und Verw. Gebiete}, 29:109--122,
  1974.

\bibitem{Wong-1}
E.~Wong and M.~Zakai.
\newblock Weak martingales and stochastic integrals in the plane.
\newblock {\em Ann. Probability}, 4(4):570--586, 1976.

\end{thebibliography}

\end{document}